\documentclass[a4paper,11pt]{article}
\usepackage{anyfontsize}
\usepackage{lmodern}
\usepackage{microtype}
\usepackage{amsfonts,amssymb,amsmath,amsthm}
\usepackage{nicefrac,xfrac}
\usepackage{mathtools} 
\mathtoolsset{showonlyrefs}
\usepackage[colorlinks=true, urlcolor=blue, linkcolor=blue, citecolor=blue]{hyperref}
\usepackage[
  backend=biber,
  style=alphabetic,
  sorting=anyt,
  maxnames=6,
  maxalphanames=6,
  giveninits=true,
  doi=false,
  isbn=false,
  url=false,
]{biblatex}
\DeclareFieldFormat{extraalpha}{#1}
\DeclareLabelalphaTemplate{
  \labelelement{
    \field[final]{shorthand}
    \field{label}
    \field[strwidth=3,strside=left,ifnames=1]{labelname}
    \field[strwidth=1,strside=left]{labelname}
  }
}

\numberwithin{equation}{section}
\newtheorem{theorem}{Theorem}[section]
\newtheorem{proposition}[theorem]{Proposition}
\newtheorem{corollary}[theorem]{Corollary}

\newtheorem{lemma}[theorem]{Lemma}

\theoremstyle{definition}
\newtheorem{remark}[theorem]{Remark}
\newtheorem{definition}[theorem]{Definition}
\newtheorem{question}[theorem]{Question}
\theoremstyle{remark}

\newcommand{\R}{\mathbb{R}}  
\newcommand{\Sph}{\mathbb{S}}  
\DeclareMathAlphabet{\mathbbold}{U}{bbold}{m}{n}

\newcommand{\overbar}[1]{\mkern 1.5mu\overline{\mkern-1.5mu#1\mkern-1.5mu}\mkern 1.5mu}

\mathchardef\mhyphen="2D 
\DeclareMathOperator{\id}{id}

\DeclareMathOperator{\supp}{supp}

\DeclarePairedDelimiterX\set[2]{\{}{\}}{\,#1 \;\delimsize\vert\; #2\,}
\newcommand*{\abs}[1]{\left|#1\right|} 
\newcommand*{\norm}[1]{\left\|#1\right\|} 
\newcommand*{\brr}[1]{\left(#1\right)} 
\newcommand*{\brc}[1]{\left\{#1\right\}} 
\newcommand*{\brt}[1]{\left\langle #1\right\rangle } 
\mathchardef\mhyphen="2D 
\renewcommand*{\epsilon}{\varepsilon}
\renewcommand*{\phi}{\varphi}
\newcommand*{\D}{H^1\cap C^0}

\def\Xint#1{\mathchoice
{\XXint\displaystyle\textstyle{#1}}%
{\XXint\textstyle\scriptstyle{#1}}%
{\XXint\scriptstyle\scriptscriptstyle{#1}}%
{\XXint\scriptscriptstyle\scriptscriptstyle{#1}}%
\!\int}
\def\XXint#1#2#3{{\setbox0=\hbox{$#1{#2#3}{\int}$ }
\vcenter{\hbox{$#2#3$ }}\kern-.6\wd0}}

\def\dashint{\Xint-}
\makeatletter
\newcommand*{\oset}[3][0.45ex]{%
  \mathrel{\mathop{#3}\limits^{
    \vbox to#1{\kern-2\ex@
    \hbox{$\scriptstyle#2$}\vss}}}}
\makeatother

\title{Geometric bounds for Steklov and weighted Neumann eigenvalues on Euclidean domains}
\author{Denis Vinokurov}
\date{}

\begin{document}

\maketitle
\begin{abstract}
  We obtain sharp upper bounds for the first two nonzero Steklov eigenvalues among bounded domains in Euclidean spaces of dimension $d \geq 7$ under
  a natural normalization involving volume and boundary measure. These bounds are derived from a characterization of optimal domains and weights
  for the first two nonzero weighted Neumann eigenvalues. In dimensions $3 \leq d \leq 6$, we obtain strict upper bounds.

  We further establish strict upper bounds for all higher Steklov eigenvalues on planar simply connected domains with continuous boundary, extending previous results which, beyond the second nonzero eigenvalue, were known only for smooth planar domains.
\end{abstract}

\section{Introduction and main results}
Let $\Omega \subset \R^d$ be a bounded open set (not necessarily connected), which we will refer to as a bounded domain, and let $d \geq 3$. Consider the Steklov eigenvalue problem
\begin{equation}\label{eq:Steklov-problem}
 \begin{cases}
   \Delta u = 0,
   \\ \partial_\nu u\left|_{\partial\Omega}\right. = \sigma u,
 \end{cases}
\end{equation}
where $\partial_\nu$ is the outward normal derivative along the boundary
of $\Omega$. When the trace operator $H^1(\Omega) \to L^2(\partial\Omega)$
is well-defined and compact (for instance, for Lipschitz domains) and $\Omega$ is connected, the spectrum of \eqref{eq:Steklov-problem}
is discrete and converges to infinity:
\begin{equation}\label{eq:Steklov-spectrum}
  0 = \sigma_0(\Omega) < \sigma_1(\Omega) \leq
  \sigma_2(\Omega) \leq \cdots\nearrow \infty,
\end{equation}
where the eigenvalues are repeated according to their multiplicity.

A natural question is which shapes maximize the $k$th Steklov eigenvalue within a given class of domains.
Since $\Omega \mapsto \sigma_k(\Omega)$ is a homogeneous functional, a suitable normalization of $\sigma_k$ is required.
Normalizations by powers of the volume and perimeter are among the natural choices.

For instance, Brock \cite{Brock:2001:first-stek-max-for-vol-norm} proved that
$\sigma_1(\Omega)|\Omega|^{1/d}$ is maximized uniquely by Euclidean balls.
Among simply connected planar domains \cite{Weinstock:1954:first-steklov-simpl-dom}, and in the class of convex domains when $d\geq 3$ \cite{Bucur-Ferone-Nitsch-Trombetti:2021:weinstock-higher-dim}, the Euclidean ball also maximizes
$\sigma_1(\Omega)|\partial\Omega|^{1/(d-1)}$.
Moreover, for simply connected planar domains, Girouard and Polterovich \cite{Girouard-Polterovich:2010:payne-schiffer} proved that for $\sigma_2(\Omega)|\partial\Omega|$,
there is no maximizer; instead, the sharp value is approached by a sequence of domains converging to a disjoint union of two Euclidean balls (cf. Theorem~\ref{thm:max-lambda-2}).
We refer to \cite{Girouard-Polterovich:2017:steklov-survey, Colbois-Girouard-Gordon-Sher:2024:recent-develop-on-steklov} and references therein
for surveys of recent developments related to Steklov eigenvalues.

Among all normalizations given by the powers of $\abs{\partial\Omega}$ and $\abs{\Omega}$, the normalization
\begin{equation}\label{eq:shape-problem}
    \Omega \mapsto \sigma_k(\Omega)\abs{\partial\Omega}\abs{\Omega}^{\frac{2-d}{d}}
\end{equation}
appears to be the most geometric one:
when one varies Riemannian metrics within a conformal class together with boundary densities, problem~\eqref{eq:shape-problem} is the only one that admits
critical points, and they correspond to free boundary harmonic maps into Euclidean balls. See~\cite{Karpukhin-Metras:2022:higher-dim} for a more detailed discussion of different normalizations.

It follows from \cite{Colbois-ElSoufi-Girouard:2011:isoperim-steklov} that in any dimension $d\geq 2$, both quantities \eqref{eq:shape-problem} and $\sigma_k(\Omega)|\partial\Omega|^{1/(d-1)}$ are uniformly bounded as $\Omega \subset \mathbb{R}^d$ ranges over bounded domains.
However, when $d \geq 3$, no sharp upper bounds are currently known even for $\sigma_1(\Omega)|\partial\Omega|^{1/(d-1)}$ in this general setting.
The main goal of this paper is to establish upper bounds for $\sigma_1$ and $\sigma_2$ with the normalization~\eqref{eq:shape-problem}, which are
sharp at least for $d \geq 7$.

An interesting phenomenon arises when one maximizes $\sigma_k(\Omega)|{\partial\Omega}|$ in dimension $d=2$. Via homogenization and conformal invariance of the Steklov spectrum, it was proved in \cite{Girouard-Karpukhin-Lagace:2021:continuity-of-eigenval} that the estimate
\begin{equation}
    \sigma_k(\Omega)|{\partial\Omega}| < 8\pi k =\sup_{g} \lambda_k(\Sph^2, g)\big|\Sph^2\big|_g
\end{equation}
is sharp. We will use similar homogenization ideas to obtain sharp upper bounds in higher dimensions for the following class of admissible $\Omega$,
which includes Lipschitz domains.
\begin{definition}
    A \emph{bounded} domain $\Omega$ is called \emph{admissible} if it satisfies the following three assumptions:
    \begin{itemize}
        \item $\D(\overbar{\Omega})$ is dense in $H^1(\Omega)$;
        \item the embedding $H^1(\Omega) \to L^2(\Omega)$ is compact;
        \item $\overbar{\Omega} = \bigsqcup_{i} \overbar{\Omega}_i$, where $\Omega_i$ are the connected components of $\Omega$.
    \end{itemize}
\end{definition}
\begin{remark}
    Bounded domains with continuous boundary (that is, $\Omega$ can be locally represented as the epigraph of a continuous function) are admissible;
    see, for example, \cite[Theorem~1.1.6/2]{Mazja:1985:-sobolev-spaces} and \cite[Theorem~V.4.17]{Edmunds-Evans:1987:spec-theory-diff-operators}.
    In this case, $C^\infty(\overbar{\Omega})$ is dense in $H^1(\Omega)$.
\end{remark}
\begin{remark}
    The definition of admissibility is chosen mainly to ensure the applicability of the variational characterization of Corollary~\ref{cor:k-th-eigenval-as-inf}.
    The $3$rd property allow us to consider disjoint unions of connected domains.
\end{remark}
It is natural to view the Steklov problem as a particular case of an eigenvalue problem for measures, a perspective that has proved useful in its own
right (see \cite{Grigoryan-Netrusov-Yau:2004:eignval-of-ellipt-oper,Kokarev:2014:measure-eigenval,Girouard-Karpukhin-Lagace:2021:continuity-of-eigenval}).
For a (nonnegative) Radon measure $\mu \in \mathcal{M}_+(\overbar{\Omega})$, let us define variational Neumann eigenvalues $\lambda^N_k(\Omega, \mu) \in [0,\infty]$ as
\begin{equation}\label{eq:eigen-var-char-metric}
  \lambda^N_k(\Omega, \mu) := \inf_{V_{k+1}} \sup_{\phi \in V_{k+1}\setminus\brc{0}} \frac{\int_{\Omega} \abs{\mathrm{d}\phi}^2}{\int_{\overbar{\Omega}} \phi^2 d\mu},
\end{equation}
where $V_{k+1} \subset \D(\overbar{\Omega})$ runs over all $(k+1)$-dimensional subspaces. We define
\begin{equation}
    \overbar{\lambda}^N_k(\Omega, \mu) = \mu(\overbar{\Omega}){\lambda}^N_k(\Omega, \mu),
\end{equation}
and we set $\lambda^N_k(\Omega, 0) = \infty$, $\overbar{\lambda}^N_k(\Omega, 0) = 0$ for convenience.
Note that $\lambda^N_k(\Omega, \mu) < \infty$ provided that $L^2(\overbar{\Omega}, \mu)$ is at least $(k+1)$-dimensional. In particular, $\lambda^N_k(\Omega, \mu) < \infty$ as long as $\mu \in \mathcal{M}_+^c(\overbar{\Omega})$ is a continuous (that is, nonatomic)
measure.

For a Lipschitz $\Omega$, one recovers the Steklov spectrum~\eqref{eq:Steklov-spectrum} by choosing $\mu = \mathcal{H}^{d-1}|_{\partial\Omega}$ to be the $(d-1)$-dimensional Hausdorff measure restricted to $\partial\Omega$:
\begin{equation}\label{eq:var-steklov-def}
    \overbar{\sigma}_k(\Omega) := \sigma_k(\Omega)|{\partial\Omega}| = \overbar{\lambda}^N_k(\Omega, \mathcal{H}^{d-1}|_{\partial\Omega}).
\end{equation}
It is then natural to extend the definition of the Steklov eigenvalues by formula~\eqref{eq:var-steklov-def} to the case of all
admissible domains $\Omega$ with $\abs{\partial \Omega} < \infty$. Note, however,
that the Steklov spectrum is not necessarily discrete when the boundary is not Lipschitz (see, for example, \cite{Nazarov-Taskinen:2020:noncomp-steklov}).

Consider
\begin{equation}
    \Lambda_k^N(\Omega)
    =\sup_{\mu \in \mathcal{M}_+^c(\overbar{\Omega})}  \overbar{\lambda}^N_k(\Omega, \mu)
    \leq  C\abs{\Omega}^{\frac{d-2}{d}} k^{2/d},
\end{equation}
where the upper bound follows from~\cite[Remark~5.10]{Grigoryan-Netrusov-Yau:2004:eignval-of-ellipt-oper} and $C = C(d)$.
Clearly, if $\Omega$ runs over admissible domains, we have
\begin{equation}
  \sup_{\abs{\partial \Omega} < \infty}\sigma_k(\Omega)|{\partial\Omega}|\abs{\Omega}^{\frac{2-d}{d}} \leq \sup_{\Omega} \Lambda_k^N(\Omega)\abs{\Omega}^{\frac{2-d}{d}}
  \leq C k^{2/d}.
\end{equation}
At the same time, the gap between these two suprema may not be very large, as the next proposition suggests.
\begin{proposition}[{\cite[Theorem~1.11]{Girouard-Karpukhin-Lagace:2021:continuity-of-eigenval} and \cite[Proposition~2.9]{Vinokurov:2025:higher-dim-harm-eigenval}}]
  \label{prop:steklov-approaches}
    Let $\Omega \subset \R^d$ be a bounded $C^1$-domain. Then there exists
    a family of $C^1$ domains $\Omega^\epsilon \subset \Omega$ such
    that $|\Omega^\epsilon|\to |\Omega|$ and
    \begin{equation}
        \sigma_k(\Omega^\epsilon)|{\partial\Omega^\epsilon}| \to \sup_{\mu \in L^1_+(\Omega)}  \overbar{\lambda}^N_k(\Omega, \mu).
    \end{equation}
\end{proposition}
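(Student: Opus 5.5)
The plan is a homogenization argument: carve out of $\Omega$ many tiny holes whose boundaries carry surface measure that, at the macroscopic scale, reproduces a prescribed density $\mu$.

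\textbf{Reduction.} It suffices to prove the following. For every $\mu \in L^1_+(\Omega)$ with continuous positive density (by density such $\mu$ approximate the supremum) and every $\delta>0$ there is a $C^1$ domain $\Omega^\epsilon\subset\Omega$ with $|\Omega\setminus\Omega^\epsilon|<\delta$ and
\[
  \bigl|\overbar{\sigma}_k(\Omega^\epsilon) - \overbar{\lambda}^N_k(\Omega,\mu)\bigr| < \delta .
\]
This gives a lower bound on $\liminf \overbar{\sigma}_k(\Omega^\epsilon)$.

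\textbf{Upper bound.} The matching upper bound, $\limsup \le \sup_\mu$, needs its own argument, since $\mathcal{H}^{d-1}|_{\partial\Omega^\epsilon}$ is not absolutely continuous on $\Omega$. I would argue as follows.
\begin{itemize}
  \item Near the boundary, write $\partial\Omega^\epsilon = \partial\Omega \cup \partial(\text{holes})$. The outer part $\partial\Omega$ can be approximated by measures supported in a thin collar. Using the $C^1$ regularity, a Steklov problem on $\Omega$ is the limit of Neumann problems with densities $\epsilon^{-1}\mathbf 1_{\{\mathrm{dist}(x,\partial\Omega)<\epsilon\}}$.
  \item Alternatively, one can avoid this step by taking the holes' measure dominant and letting the $\partial\Omega$ contribution be negligible. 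To do this, scale $\mu$ so that $\mu(\Omega)\gg|\partial\Omega|$ and use homogeneity of $\overbar\lambda$.
\end{itemize}

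\textbf{Construction.} Tile $\Omega$ by cubes of side $\epsilon$. In each cube $Q$ away from $\partial\Omega$, remove a small ball (or a union of $N_Q$ balls) of radius $r_\epsilon\ll\epsilon$. The number is chosen so that the total boundary area is about $\mu(Q)$, and the removed volume $N_Q r_\epsilon^d$ is negligible. This is possible when $d\ge 3$, because area scales like $r^{d-1}$ and volume like $r^d$. Crucially, the capacity of the holes must vanish: $N_Q r_\epsilon^{d-2}\ll \epsilon^d$. Otherwise the holes would create a "strange term" in the Dirichlet energy. In the Steklov problem no Dirichlet condition is imposed on the holes; what is needed is that the boundary weight does not distort the energy. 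Requiring $N_Q r^{d-1}\sim\epsilon^d\mu$ and $N_Q r^{d-2}\to0$ relative to $\epsilon^d$ is compatible when $r\to0$.

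\textbf{Convergence of eigenvalues.} This is the method of \cite{Girouard-Karpukhin-Lagace:2021:continuity-of-eigenval}. One shows that the measures $\mu_\epsilon=\mathcal{H}^{d-1}|_{\partial\Omega^\epsilon}$ converge to $\mu$ in a strong enough sense: for $H^1$-bounded sequences,
\[
  \int \phi_\epsilon^2\, d\mu_\epsilon \to \int \phi^2\, d\mu .
\]
This follows from a trace inequality on each small sphere, which compares the sphere average to the average over the cube with an error controlled by $\epsilon$ and $r$ times $\|\mathrm d\phi\|^2$. In addition, $H^1(\Omega^\epsilon)$ functions admit uniformly bounded harmonic extensions into the holes. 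With these two facts, min-max over $(k+1)$-dimensional spaces gives convergence of $\lambda_k$ in both directions.

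\textbf{Main obstacle.} I expect the uniform trace/extension estimate to be the hardest step, since it requires a uniform compactness, namely no concentration of eigenfunctions on the small spheres. A naive trace bound on a sphere of radius $r$ costs $r^{-1}$. The remedy is that the total area is only $O(1)$, so one needs the refined estimate
\[
  \int_{\partial B_r}\phi^2 \le C\Bigl(r^{d-1}\epsilon^{-d}\int_Q\phi^2 + r^{d-1}\epsilon^{2-d}\int_Q |\mathrm d\phi|^2\Bigr)
\]
(with logarithms when $d=2$), and then sums this over the cubes. Finally, I would smooth the cube-boundary effects near $\partial\Omega$ and check that $\Omega^\epsilon$ is $C^1$, which it is, being $\Omega$ minus finitely many disjoint closed balls.
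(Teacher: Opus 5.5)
The paper does not prove this statement; it imports it from the cited works. Your architecture is the perforation/homogenization scheme used there:
\begin{itemize}
\item reduce to bounded continuous densities, which is fine, since truncation $\min\{\mu,n\}$ only raises $\lambda_k$ while the mass converges, and smoothing is then continuous in $L^{d/2}$;
\item perforate $\Omega$ by interior balls whose boundary area in each cell reproduces $\mu$, and extend uniformly across the holes;
\item prove convergence of the boundary measures as quadratic forms on $H^1$;
\item make $\mathcal{H}^{d-1}|_{\partial\Omega}$ negligible by replacing $\mu$ with $t\mu$, $t\to\infty$, using scale invariance of $\overbar{\lambda}^N_k$ and boundedness of the trace form on $H^1(\Omega)$.
\end{itemize}
Your separate ``upper bound'' step is unnecessary. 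The two-sided estimate in your reduction, plus a diagonal choice along a maximizing sequence $\mu_j$, already gives convergence to the supremum.

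The quantitative core, however, is wrong as written, in two places.

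\textbf{Capacity.} The condition you call crucial, $N_Q r^{d-2}\ll\epsilon^d$, is \emph{incompatible} with $N_Q r^{d-1}\sim\epsilon^d\mu$: their ratio is $\sim\mu/r\to\infty$. With one hole per cell, $r\asymp\epsilon^{d/(d-1)}$. This is far above the critical Dirichlet radius $\epsilon^{d/(d-2)}$, and the total capacity of the holes diverges like $r^{-1}$. If vanishing capacity were needed, the construction would be impossible. It is not needed, because no Dirichlet condition is imposed on the holes. What you need is only that the balls are well separated ($B_{2r}(p)$ pairwise disjoint, $r/\epsilon\to0$) and that their total volume tends to $0$. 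Separation gives an extension $H^1(\Omega^\epsilon)\to H^1(\Omega)$ with a scale-invariant energy bound.

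\textbf{Trace estimate.} The refined trace inequality you single out as the key step is false. Take $\phi(x)=\eta(\abs{x-p}/r)$ with $\eta=1$ on $[0,1]$ and $\eta=0$ on $[2,\infty)$. The left side is $\asymp r^{d-1}$, whereas your right side is $\asymp r^{d-1}\bigl((r/\epsilon)^{d}+(r/\epsilon)^{d-2}\bigr)\ll r^{d-1}$. The same example shows that the coefficient of the Dirichlet energy can be no better than $\asymp r$. The correct bound, for $B_{2r}(p)\subset Q$ and $d\geq 3$, is
\[
  \int_{\partial B_r(p)}\phi^2\leq C\Bigl(\frac{r^{d-1}}{\epsilon^{d}}\int_Q\phi^2+r\int_Q\abs{\mathrm{d}\phi}^2\Bigr).
\]
To prove it, subtract the cell mean $\bar\phi_Q$, apply the scaled trace inequality on $B_{2r}(p)\setminus B_r(p)$, and use Sobolev--Poincar\'e on $Q$ in the form $\int_{B_{2r}(p)}(\phi-\bar\phi_Q)^2\leq Cr^2\int_Q\abs{\mathrm{d}\phi}^2$.

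Apply this with one hole per cell; estimating $N_Q$ holes against the same cell would produce a factor $N_Qr$, which need not be small. Summing over cells gives
\[
\Bigl|\int\phi^2\,\mathrm{d}\mathcal{H}^{d-1}|_{\partial\Omega^\epsilon}-\int_\Omega\phi^2\mu-\int_{\partial\Omega}\phi^2\Bigr|\leq o(1)\norm{\phi}_{H^1}^2
\]
uniformly. It is precisely the vanishing coefficient $r$ that excludes spurious small eigenvalues: concentrating near a hole costs a Rayleigh quotient $\gtrsim 1/r$.

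With this estimate in place of yours, the outline goes through, with one further adjustment. For the $\liminf$ direction, plain min-max fails because the extension constant is not $1$. Use a compactness argument instead: take weak $H^1$ limits of the extended eigenfunctions, and use lower semicontinuity of $\int_{\Omega^\epsilon}\abs{\mathrm{d}\phi_\epsilon}^2$, which holds because the holes have vanishing volume.
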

We first explicitly calculate $\sup_{\Omega} \Lambda_k^N(\Omega)\abs{\Omega}^{\frac{2-d}{d}}$ for $k \in \brc{1,2}$, which can be viewed as a variant of a weighted
Neumann problem (cf. \cite{Bucur-Martinet-Oudet:2023:weighted-neumann}).
 Let $\mathbb{B}^d \subset \R^d$ be the unit ball centered at $0$, and let $\omega_{d}$ be the volume of $\mathbb{B}^d$. We also identify functions $f \in L^1$ with
 absolutely continuous measures $\mathrm{d}\mu = f(x)\mathrm{d} x$.
\begin{theorem}\label{thm:max-lambda-1}
    Let $\Omega \subset \R^d$ be an admissible domain such that $|\Omega| = |\mathbb{B}^d| = \omega_{d}$, and $d \geq 7$. Then for any
    $\mu \in \mathcal{M}^c_+(\overbar{\Omega})$, one has
    \begin{equation}
      \overbar{\lambda}^N_1(\Omega, \mu) \leq \overbar{\lambda}^N_1(\mathbb{B}^d, \tfrac{1}{\abs{x}^2}) = \tfrac{d(d-1)}{d-2}\omega_{d}.
    \end{equation}
    Equality holds if and only if $\Omega$ is a.e. isometric to $\mathbb{B}^d$ and $\mu$ is proportional to $\frac{1}{\abs{x}^2}$.
\end{theorem}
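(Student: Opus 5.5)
We use a Hersch-type center-of-mass argument, but with a conformal family adapted to the weight $\abs{x}^{-2}$ rather than to the sphere. On $\mathbb{B}^d$ with $\mu_0 = \abs{x}^{-2}dx$, the coordinate functions $x_i$ should be first eigenfunctions. Formally, $\Delta x_i = 0$ in the interior, and on $\partial\mathbb{B}^d$ we have $\partial_\nu x_i = x_i$. This is not the equation $-\Delta u = \lambda u/\abs{x}^2$ with Neumann condition, so the coordinates are the wrong candidate. A better guess is $u_i = x_i \abs{x}^{\alpha}$ with $\alpha$ chosen so that $\partial_\nu u_i = 0$ on the sphere. A direct computation gives $\partial_r(r^{1+\alpha}) = (1+\alpha)r^\alpha$, which forces $\alpha = -1$, i.e. $u_i = x_i/\abs{x}$, the radial projection. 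Then $\abs{\mathrm{d}u}^2 = (d-1)/\abs{x}^2$ and $\sum u_i^2 = 1$, so
\begin{equation}
\int \abs{\mathrm{d}u}^2 = (d-1)\int_{\mathbb{B}^d}\abs{x}^{-2}dx = (d-1)\mu_0(\mathbb{B}^d).
\end{equation}
This gives eigenvalue $d-1$ and $\mu_0(\mathbb{B}^d) = d\omega_d/(d-2)$, matching $\tfrac{d(d-1)}{d-2}\omega_d$. So the natural test map is the projection $x\mapsto x/\abs{x}$ onto $\Sph^{d-1}$, composed with a point translation.

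\textbf{Step 1: test functions.} For an arbitrary $\Omega$ and continuous $\mu$, we use $u_i = (x_i - a_i)/\abs{x-a}$ for a center $a \in \R^d$. We choose $a$ so that $\int u_i\,d\mu = 0$ for all $i$. Existence of $a$ comes from a degree/Brouwer argument: the map $a \mapsto \int \frac{x-a}{\abs{x-a}}\,d\mu$ points roughly toward $-a$ for large $\abs{a}$. Continuity of the map uses that $\mu$ is nonatomic. We may also need to compose with conformal maps of the sphere to fix the full balancing, but the $d$ conditions versus $d$ parameters should suffice. The functions $u_i$ lie in $H^1$ since $d\geq 3$; approximating by $\D$ functions is routine.

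\textbf{Step 2: the energy bound.} Summing the Rayleigh quotients gives
\begin{equation}
\lambda_1\,\mu(\overbar\Omega) \le \sum_i\int_\Omega\abs{\mathrm{d}u_i}^2 = (d-1)\int_\Omega \frac{dx}{\abs{x-a}^2}.
\end{equation}
By the bathtub (rearrangement) principle, since $\abs{x-a}^{-2}$ is radially decreasing about $a$ and $\abs{\Omega}=\omega_d$, we get
\begin{equation}
\int_\Omega \abs{x-a}^{-2}\,dx \le \int_{\mathbb{B}^d}\abs{x}^{-2}\,dx = \frac{d\,\omega_d}{d-2}.
\end{equation}
Equality holds iff $\Omega = a+\mathbb{B}^d$ a.e. This yields the inequality, with no dependence on $\mu$ at all.

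\textbf{Step 3: the main obstacle.} The real difficulty is showing that the bound is attained by $\mu_0 = \abs{x}^{-2}$ on the ball, i.e. that $x_i/\abs{x}$ are genuinely first eigenfunctions. One must rule out radial eigenfunctions with smaller eigenvalue, e.g. $f(r)$ solving $-(r^{d-1}f')' = \lambda r^{d-3}f$ with $f'(1)=0$ and $\int f r^{d-3}\,dr = 0$. This is a Hardy-type inequality whose sharp constant $((d-2)/2)^2$ must exceed $d-1$. That comparison holds exactly when $d^2-8d+8\geq 0$, i.e. for $d\geq 7$, which explains the dimension restriction.

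I would prove the radial spectral gap via the Euler substitution $r=e^{t}$, which turns the problem into a Sturm–Liouville problem. Higher spherical harmonics are handled because their angular energies exceed $d-1$.

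\textbf{Step 4: equality.} In the equality case, $\Omega$ is a ball and each $u_i$ is a first eigenfunction. The weak eigen-equation $\operatorname{div}\,\mathrm{d}u_i = -\lambda u_i\,\mu$ then forces $\mu = c\abs{x-a}^{-2}dx$.
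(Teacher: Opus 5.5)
Your proposal is correct and follows essentially the same route as the paper: you center $x/\abs{x}$ by a degree argument so that its components are $\mu$-orthogonal to constants, sum the Rayleigh quotients and bound $\int_\Omega \abs{x-a}^{-2}$ by the bathtub principle, show $\lambda_1^N(\mathbb{B}^d,\abs{x}^{-2}) = d-1$ for $d\geq 7$ via spherical harmonics with the radial sector controlled by $(\tfrac{d-2}{2})^2$, and identify $\mu$ in the equality case from the eigen-equation for $x/\abs{x}$. The only difference is technical: the paper gets the radial bound from the explicit solutions $r^{\beta_\pm}$ and Weidmann's criterion for the bottom of the essential spectrum, while your substitution $r=e^t$ reduces it to a constant-coefficient half-line problem---note that the obstruction for $d\leq 6$ is essential spectrum rather than radial eigenfunctions, which your quadratic-form (Hardy-type) formulation handles correctly.
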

Since the spectrum of a disjoint union of domains is the union of the spectra, we have
$\overbar{\lambda}^N_2(\mathbb{B}^d \sqcup \mathbb{B}^d, \tfrac{1}{\abs{x}^2} \sqcup \tfrac{1}{\abs{x}^2}) = 2 \overbar{\lambda}^N_1(\mathbb{B}^d, \tfrac{1}{\abs{x}^2})$
in the next theorem.
\begin{theorem}\label{thm:max-lambda-2}
    Let $\Omega \subset \R^d$ be an admissible domain such that $|\Omega| = 2|\mathbb{B}^d| = 2 \omega_{d}$, and $d \geq 7$. Then for any
    $\mu \in \mathcal{M}^c_+(\overbar{\Omega})$, one has
    \begin{equation}
        \overbar{\lambda}^N_2(\Omega, \mu) \leq \overbar{\lambda}^N_2(\mathbb{B}^d \sqcup \mathbb{B}^d, \tfrac{1}{\abs{x}^2} \sqcup \tfrac{1}{\abs{x}^2})
        =\tfrac{d(d-1)}{d-2}(2\omega_{d}).
    \end{equation}
    Equality holds if and only if $\Omega$ is a.e. isometric to $\mathbb{B}^d  \sqcup \mathbb{B}^d$ and $\mu$ is proportional to
    $\frac{1}{\abs{x}^2} \sqcup \frac{1}{\abs{x}^2}$.
\end{theorem}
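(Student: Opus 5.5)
We prove Theorem~\ref{thm:max-lambda-2} by the Nadirashvili / Girouard--Polterovich ``folding'' argument. It combines a Hersch-type conformal balancing with the one-ball estimate underlying Theorem~\ref{thm:max-lambda-1}, which we take as already established.

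First we isolate the pointwise inequality behind Theorem~\ref{thm:max-lambda-1}. For a measure $\mu$ on $\overbar{\Omega}$ we find a conformal map $\Phi = (\Phi_1,\dots,\Phi_d)\colon \R^d\cup\{\infty\}\to \overbar{\mathbb{B}}^d$, composed with a Möbius transformation $g_\xi$, chosen so that $\int \Phi_i\, d\mu = 0$; this is a Brouwer degree argument, and it uses that $\mu$ is nonatomic. Using $\Phi_i$ as test functions gives
\[
  \lambda_1^N(\Omega,\mu)\int |\Phi|^2\, d\mu \le \int_\Omega |\mathrm{d}\Phi|^2 .
\]
The key point of the one-ball theorem is a rearrangement or isoperimetric estimate bounding $\int_\Omega |\mathrm{d}\Phi|^2$ when $|\Phi|\equiv 1$ on the support. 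More realistically, one modifies $\Phi$ radially, $\Phi = \psi(r)\,x/r$ with $\psi$ tuned so that $\psi$ saturates on the relevant region. Here $d\ge 7$ enters: the Hardy-type weight $1/|x|^2$ is optimal exactly when the radial profile $\psi(r)=r$ does better than the alternatives. The conclusion is an inequality of the form
\[
  \overbar{\lambda}_1^N(\Omega,\mu)\le \tfrac{d(d-1)}{d-2}\,\omega_d^{2/d}\,|\Omega|^{(d-2)/d},
\]
valid for any admissible $\Omega$ of any volume. The same estimate applies to the restriction of the test maps to any subset $A$, with $|\Omega|$ replaced by $|A|$. We extract this localized version from the proof of Theorem~\ref{thm:max-lambda-1} as a lemma.

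For $\lambda_2$ we construct a $3$-dimensional family of test functions orthogonal in $L^2(\mu)$ to constants. For a cap parameter, meaning a ball or half-space $B\subset\R^d$ in the Möbius family, we define the folded map $\Phi^B$. It equals the conformal normalizing map on $B$. On the complement $B^c$, it is that map composed with the reflection across $\partial B$, so that the two halves are folded onto one side. Its $d$ components together with $1$ span a space that needs to be $\mu$-orthogonal in a suitable sense. A topological (degree) argument over the space of caps, identified with $\mathbb{B}^{d+1}$ with boundary the degenerate caps, shows that some $B$ gives $\int \Phi^B_i\,d\mu=0$. The degenerate boundary cases are excluded by the nonatomic assumption, exactly as in \cite{Girouard-Polterovich:2010:payne-schiffer}. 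Then for each $i$ the functions $\Phi_i^B\chi_B$ and $\Phi_i^B\chi_{B^c}$ are two test functions. Using the standard trick of taking the span of $1$ and these two, we obtain
\[
  \lambda_2\,\mu(\overbar\Omega)\le \max\{\text{bound on } \Omega\cap B,\ \text{bound on } \Omega\setminus B\},
\]
each side estimated by the localized lemma applied to volumes $|\Omega\cap B|$ and $|\Omega\setminus B|$. Since $t\mapsto t^{(d-2)/d}$ is concave, combining $\lambda_2\mu(\Omega\cap B)\le c|\Omega\cap B|^{(d-2)/d}$ with the analogous bound on the complement gives
\[
  \overbar{\lambda}_2 \le c\bigl(a^{(d-2)/d}+b^{(d-2)/d}\bigr)\le 2c\,\omega_d^{(d-2)/d},
\]
where $a+b=2\omega_d$ and the maximum occurs at $a=b$. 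This yields the constant $\tfrac{d(d-1)}{d-2}(2\omega_d)$.

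For equality, all inequalities must be equalities. Then $a=b=\omega_d$, and each piece is an equality case of Theorem~\ref{thm:max-lambda-1}. So $\Omega\cap B$ and $\Omega\setminus B$ are each a.e.\ balls with measure $\propto|x-x_j|^{-2}$, and admissibility (the third condition) forces the two balls to be disjoint. The main obstacle is the localized lemma: the proof of Theorem~\ref{thm:max-lambda-1} must survive the folding, since the reflected map is only Lipschitz across $\partial B$ and the pieces $\Omega\cap B$ need not be admissible. I would handle this by working directly with the test functions in $H^1\cap C^0(\overbar{\Omega})$, truncated by $\chi_B$ via an approximation argument, rather than invoking the theorem on subdomains.
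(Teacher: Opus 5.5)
Your proposal has a genuine gap at the point where a $\lambda_1$-type balancing is supposed to control $\lambda_2$.

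The degree argument over your $(d+1)$-parameter family of caps only gives $\int \Phi^B_i\,\mathrm{d}\mu=0$, i.e.\ orthogonality to constants. That bounds $\lambda_1$, not $\lambda_2$. The ``standard trick'' you use to compensate fails on three counts.
\begin{itemize}
\item $\Phi^B_i\chi_B$ and $\Phi^B_i\chi_{B^c}$ are not in $H^1(\Omega)$. The map $\Phi^B$ is continuous across $\partial B$ but in general does not vanish there (it has unit length). So truncation creates a jump, and no approximation in $H^1\cap C^0(\overbar{\Omega})$ keeps the Dirichlet energy bounded.
\item Even for genuinely disjointly supported $f_1,f_2\in H^1$, bounding the Rayleigh quotient on $\mathrm{span}\{1,f_1,f_2\}$ by $\max\{R(f_1),R(f_2)\}$ requires $\int f_1\,\mathrm{d}\mu=\int f_2\,\mathrm{d}\mu=0$ separately. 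That is $2d$ conditions, not $d$.
\item A bound $\lambda_2\le\max\{R(f^i_1),R(f^i_2)\}$ for each component $i$ cannot be summed over $i$. It does not yield the two separate inequalities $\lambda_2\,\mu(\Omega\cap B)\le C\,|\Omega\cap B|^{(d-2)/d}$ and $\lambda_2\,\mu(\Omega\setminus B)\le C\,|\Omega\setminus B|^{(d-2)/d}$ that your concavity step actually needs.
\end{itemize}

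There are also dimension-specific problems.
\begin{itemize}
\item For $d\ge 3$ the fold must be a hyperplane reflection. An inversion in a sphere, as in the M\"obius family of caps, preserves neither the Dirichlet energy nor the volume of the folded piece.
\item The relevant test map is the equator map $x\mapsto (x-c)/|x-c|$, with $|\mathrm{d}u_0|^2=(d-1)/|x|^2$, not a Hersch-type conformal map.
\item The hypothesis $d\ge 7$ plays no role in the upper bound. It enters only through Lemma~\ref{lem:ball-index}, to show the bound is attained.
\end{itemize}

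The missing idea is a single, globally defined test map orthogonal both to $1$ and to the first eigenfunction $\phi_1$ (precisely, to the space $V$ of Corollary~\ref{cor:k-th-eigenval-as-inf}). Then the eigenvalue problem never has to be split.

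The paper uses the $2d$-parameter family $(c,p)\in\overbar{B}\times\overbar{B}$ of maps $x\mapsto \frac{F(x)-c}{|F(x)-c|}$, where $F=F_{p,R-|p|}$ is a hyperplane fold. It imposes the $2d$ conditions ($\mu$-orthogonality of all $d$ components to $1$ and to $\phi_1$) by a degree argument on the vector $\Phi(c,p)\in\R^{2d}$ of these averages. The averages point outward for $c\in\partial B$. For $|p|=R$ they satisfy $\Phi(R_pc,-p)=(R_p\times R_p)\Phi(c,p)$, where $R_p$ is the reflection across $p^{\perp}$. With \cite[Lemma~4.2]{Karpukhin-Stern:2024:new-character-of-conf-eigenval} this gives nonzero degree, hence a zero of $\Phi$.

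Because the map has unit length, summing the Rayleigh inequalities over components gives directly $\overbar{\lambda}^N_2(\Omega,\mu)\le\int_{\Omega\cap H}\frac{d-1}{|x|^2}+\int_{\mathcal{R}(\Omega\setminus H)}\frac{d-1}{|x|^2}$. Here $c$ has been moved to $0$, $H$ is the half-space and $\mathcal{R}$ the reflection in $\partial H$. Your concavity computation with $G(v)=\frac{d(d-1)}{d-2}\,\omega_d^{2/d}\,v^{(d-2)/d}$ is exactly the rearrangement lemma \cite[Lemma~4.1]{Freitas-Laugesen:2022:two-balls-neumann-max}. That lemma finishes both the bound and the equality case. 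So your endgame is right; what should be split is the energy integral, not the eigenvalue problem.
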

As a corollary of Proposition~\ref{prop:steklov-approaches} and the theorems above, we obtain
 \begin{corollary}\label{cor:steklov}
     Let $\Omega \subset \R^d$ be an admissible domain with $\abs{\partial \Omega} < \infty$, $k \in \brc{1,2}$, and $d\geq 7$. Then
     \begin{equation}
      \sup_{\abs{\partial \Omega} < \infty}\sigma_k(\Omega)|{\partial\Omega}|\abs{\Omega}^{\frac{2-d}{d}} = \sup_{\Omega} \Lambda_k^N(\Omega)\abs{\Omega}^{\frac{2-d}{d}},
     \end{equation}
     which results in the following sharp inequalities:
     \begin{equation}
         \sigma_1(\Omega)\abs{\partial\Omega}\abs{\Omega}^{\frac{2-d}{d}} <  \tfrac{d(d-1)}{d-2}\omega_{d}^{2/d},
         \quad\quad
         \sigma_2(\Omega)\abs{\partial\Omega}\abs{\Omega}^{\frac{2-d}{d}} < \tfrac{d(d-1)}{d-2}(2\omega_{d})^{2/d}.
     \end{equation}
 \end{corollary}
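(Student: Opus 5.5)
The plan has two parts, one for each inequality between the suprema.

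\emph{The bound $\le$.} We check that $\overbar{\sigma}_k(\Omega)=\overbar{\lambda}^N_k(\Omega,\mathcal{H}^{d-1}|_{\partial\Omega})\le\Lambda^N_k(\Omega)$ whenever $\abs{\partial\Omega}<\infty$. The measure $\mathcal{H}^{d-1}|_{\partial\Omega}$ is finite and nonatomic, since points have zero $\mathcal{H}^{d-1}$-measure for $d\ge 2$. It is therefore an element of $\mathcal{M}^c_+(\overbar{\Omega})$, and the inequality holds by the definition of the supremum. Multiplying by $\abs{\Omega}^{(2-d)/d}$ and using scale invariance of both sides, Theorems~\ref{thm:max-lambda-1} and~\ref{thm:max-lambda-2} (after rescaling $\Omega$ to volume $\omega_d$, resp. $2\omega_d$) give
\[
\sigma_k(\Omega)\abs{\partial\Omega}\abs{\Omega}^{\frac{2-d}{d}}\le \tfrac{d(d-1)}{d-2}(k\,\omega_d)^{2/d},\qquad k\in\brc{1,2}.
\]

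\emph{Sharpness and the reverse inequality.} Apply Proposition~\ref{prop:steklov-approaches} to $\Omega=\mathbb{B}^d$ for $k=1$, and to $\mathbb{B}^d\sqcup\mathbb{B}^d$ (two far-apart balls, a bounded $C^1$ domain) for $k=2$. This gives $C^1$ domains $\Omega^\epsilon$ with $|\Omega^\epsilon|\to|\Omega|$ and
\[
\overbar{\sigma}_k(\Omega^\epsilon)\to\sup_{\mu\in L^1_+}\overbar{\lambda}^N_k(\Omega,\mu)\ge\overbar{\lambda}^N_k(\Omega,\tfrac{1}{\abs{x}^2}\sqcup\cdots).
\]
The weight $1/\abs{x}^2$ is integrable since $d\ge3$, so it is admissible in this supremum. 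By the two theorems the right-hand side equals $\tfrac{d(d-1)}{d-2}\abs{\Omega}$, which is the maximal value of $\Lambda_k^N\abs{\Omega}^{(2-d)/d}$ up to the factor $\abs{\Omega}^{(d-2)/d}$. The $C^1$ domains $\Omega^\epsilon$ are admissible with finite boundary measure, so the Steklov supremum is at least the $\Lambda$ supremum. This proves the equality of suprema.

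\emph{Strictness.} This is the main point. Suppose equality held for some admissible $\Omega$ with $\abs{\partial\Omega}<\infty$. Then $\mu=\mathcal{H}^{d-1}|_{\partial\Omega}\in\mathcal{M}^c_+(\overbar{\Omega})$ attains equality in Theorem~\ref{thm:max-lambda-1} or~\ref{thm:max-lambda-2}. The equality case forces $\mu$ to be proportional to $\abs{x}^{-2}\,dx$ on a ball (or on two balls). That measure is absolutely continuous with respect to Lebesgue measure and charges the interior. By contrast, $\mathcal{H}^{d-1}|_{\partial\Omega}$ is concentrated on $\partial\Omega$, and a set with $\mathcal{H}^{d-1}(\partial\Omega)<\infty$ has Lebesgue measure zero. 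The two measures are therefore mutually singular and nonzero, which is a contradiction.

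One subtlety is that ``a.e. isometric'' only identifies $\Omega$ up to null sets. This does not matter, because the contradiction uses only the singularity of $\mu$ versus the absolute continuity of the extremal weight.
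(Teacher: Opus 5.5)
Your proposal is correct and follows the route the paper intends when it calls this a corollary of Proposition~\ref{prop:steklov-approaches} and Theorems~\ref{thm:max-lambda-1}--\ref{thm:max-lambda-2}. The inequalities come from $\overbar{\sigma}_k(\Omega)\le\Lambda_k^N(\Omega)$ together with scaling, and the approximating domains for the ball (resp.\ two disjoint balls) give sharpness and the equality of suprema. Strictness follows from the equality case, because the extremal weight $\abs{x}^{-2}\,\mathrm{d}x$ is absolutely continuous while $\mathcal{H}^{d-1}|_{\partial\Omega}$ is concentrated on the Lebesgue-null set $\partial\Omega$.
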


We complement the estimates above by establishing strict upper bounds for all $\sigma_k(\Omega)$, $k\geq 2$, on simply connected domains in dimension 2.
As mentioned above, Weinstock \cite{Weinstock:1954:first-steklov-simpl-dom} proved that
\begin{equation}
  \overbar{\sigma}_1(\Omega) \leq 2\pi
\end{equation}
for simply connected domains with an analytic boundary, and the equality is achieved if and only if
$\Omega$ is a disk. Weinstock's result was later extended to Lipschitz domains \cite{Girouard-Polterovich:2010:low-neumann-steklov, Freitas-Laugesen:2020:steklov-neuman-via-robin}.
On the other hand, for higher Steklov eigenvalues on simply connected domains, the maximum is not achieved:
\begin{equation}\label{ineq:strict-simply-conn}
  \overbar{\sigma}_k(\Omega) < 2\pi k,  \quad k \geq 2.
\end{equation}
The case $k=2$ was proved by Girouard and Polterovich \cite{Girouard-Polterovich:2010:payne-schiffer} for Lipschitz $\Omega$,
while the remaining cases were established in \cite[Remark~1.12]{Vinokurov:2025:sym-eigen-val-lms} and
\cite[Theorem~2.3]{Fraser-Schoen:2020:steklov-unions}, but only under the assumption that $\Omega$ is smooth. The regularity assumption can be weakened to
$C^{1,\alpha}$ domains by noticing that the Riemann mapping theorem yields a $C^1$-diffeomorphism
$f \colon \overbar{\mathbb{D}}^2 \to \overbar{\Omega}$ in that case via the Kellogg-Warschawski theorem.

However,
even for Lipschitz domains it remained open, to the best of the author's knowledge, whether inequality~\eqref{ineq:strict-simply-conn} is strict for all $k \geq 2$.
This question was raised in \cite[Section~4.4]{Grebenkov-Levitin-Polterovich:2026:Dirichlet-to-Neumann-map}.
We resolve this gap by proving the following result.
\begin{theorem}\label{thm:simply-connected-steklov}
  Let $\Omega \subset \R^2$ be a simply connected bounded domain with continuous rectifiable boundary (equivalently, $\abs{\partial\Omega} < \infty$). Then
  \begin{equation}
    \overbar{\sigma}_1(\Omega) \leq 2\pi
    \quad\text{and}\quad
     \overbar{\sigma}_k(\Omega) < 2\pi k \quad \forall k \geq 2.
  \end{equation}
  Moreover, $\overbar{\sigma}_1(\Omega) = 2\pi$ if and only if $\Omega$ is a disk.
\end{theorem}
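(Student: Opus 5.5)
We plan to transfer the problem to the unit disk by the Riemann map and then argue with measures on $\Sph^1$. Since $\partial\Omega$ is a Jordan curve (continuous boundary, simply connected), Carathéodory's theorem extends the Riemann map $f\colon \mathbb{D}\to\Omega$ to a homeomorphism $\overbar{\mathbb{D}}\to\overbar{\Omega}$. Rectifiability of $\partial\Omega$ gives, by the F.~and M.~Riesz theorem, that $f'$ lies in the Hardy space $H^1$. Moreover, the pushforward of $\mathcal{H}^1|_{\partial\Omega}$ is the absolutely continuous measure $\mu=|f'(e^{i\theta})|\,d\theta$ on $\Sph^1$. The Dirichlet energy is conformally invariant, and composition with the homeomorphism $f$ maps $\D(\overbar{\Omega})$ bijectively onto $\D(\overbar{\mathbb{D}})$. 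Hence, directly from the variational definition~\eqref{eq:eigen-var-char-metric},
\begin{equation}
  \overbar{\sigma}_k(\Omega)=\overbar{\lambda}^N_k(\mathbb{D},\mu),\qquad \mu=|f'|\,d\theta\in L^1_+(\Sph^1).
\end{equation}
The whole problem is thus reduced to bounding $\overbar{\lambda}^N_k(\mathbb{D},\mu)$ for nonatomic (indeed $L^1$) measures supported on $\Sph^1$.

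For $k=1$ we will use the Hersch–Weinstock center-of-mass argument. A Möbius automorphism $\Phi$ of $\mathbb{D}$ can be chosen so that $\int_{\Sph^1}\Phi\,d\mu=0$, which follows from a standard degree/fixed-point argument since $\mu$ has no atoms. The two coordinates $\Phi_1,\Phi_2$ are then admissible test functions. Using $\Phi_1^2+\Phi_2^2=1$ on $\Sph^1$ and $\int_{\mathbb{D}}|d\Phi_1|^2+|d\Phi_2|^2=2\pi$ (conformal invariance), we obtain $\lambda_1\mu(\Sph^1)\le 2\pi$.

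Equality forces both $\Phi_i$ to be first eigenfunctions. Such functions are harmonic with $\partial_\nu\Phi_i=\lambda_1\Phi_i\,\mu/d\theta$ weakly. Since $\partial_\nu\Phi_i=\Phi_i$ on the circle, the density of $\mu$ must be constant. This gives $|f'\circ\Phi^{-1}|\cdot|(\Phi^{-1})'|=$ const on $\Sph^1$, so $\log|(f\circ\Phi^{-1})'|$ is the Poisson integral of a constant. Justifying this step needs outer-function-type reasoning, available because $(f\circ\Phi^{-1})'\in H^1$ has no zeros in $\mathbb{D}$ and belongs to $H^1$. It follows that $f\circ\Phi^{-1}$ is affine and $\Omega$ is a disk.

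For $k\ge2$, the non-strict bound $\overbar{\lambda}^N_k(\mathbb{D},\mu)\le 2\pi k$ for all measures on $\Sph^1$ is known (Kokarev's / Karpukhin-type bounds for measures on the disk, or the argument of \cite{Girouard-Karpukhin-Lagace:2021:continuity-of-eigenval}). The real content is strictness. Our plan is a compactness/induction argument, which we expect to be the \emph{main obstacle}.

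Suppose $\overbar{\lambda}^N_k(\mathbb{D},\mu)=2\pi k$. A maximizing measure for $\overbar{\lambda}^N_k$ among measures on $\Sph^1$ must then exist and be critical. By the characterization of critical measures (free boundary harmonic maps, Fraser–Schoen \cite{Fraser-Schoen:2020:steklov-unions}, or the measure version in \cite{Vinokurov:2025:sym-eigen-val-lms}), such a measure is either smooth with the eigenfunctions giving a free boundary harmonic map $\mathbb{D}\to\mathbb{B}^n$, or it degenerates by bubbling.

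A free boundary minimal disk in a ball is an equatorial flat disk (Nitsche/Fraser–Schoen). This forces $\overbar{\lambda}^N_k=2\pi$ with $k=1$, a contradiction. In the bubbling alternative, the mass splits into $m\ge2$ disks, and the bound $\sum$ strictly drops unless each piece is extremal. A disk carries at most $2\pi$ per bubble, yet bubbling yields a disjoint union of disks, which does not equal the connected $(\mathbb{D},\mu)$. We would make this rigorous by showing that equality at $\mu$ itself, rather than in a limit, would require $\mu$ to concentrate, which contradicts $\mu\in L^1$.

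Concretely, the first step we would try is to prove a strict gap $\overbar{\lambda}^N_k(\mathbb{D},\mu)<\max_{1\le j<k}\bigl(\overbar{\lambda}^N_j+\overbar{\lambda}^N_{k-j}\bigr)$-type monotonicity for nonatomic $\mu$. This is the induction step used in \cite{Vinokurov:2025:sym-eigen-val-lms}, and it only requires $\mu$ nonatomic and not the smoothness of $\Omega$. Combined with the strict value $2\pi$ at $k=1$, it yields $\overbar{\sigma}_k(\Omega)<2\pi k$.
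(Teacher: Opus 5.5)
Your reduction to $(\mathbb{D},\,|f'|\,d\theta)$ and the Hersch--Weinstock bound $\overbar{\sigma}_1\le 2\pi$ are fine and agree with the paper, which simply quotes $\mathfrak{S}_k(\mathbb{D}^2)=2\pi k$. However, both parts that carry the real content of the theorem have genuine gaps.

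\textbf{Rigidity for $k=1$.} Apart from the slip $\partial_\nu\Phi_i=|\Phi'|\Phi_i$ (not $\Phi_i$), you correctly arrive at $|g'|\equiv\mathrm{const}$ a.e. on $\Sph^1$ for $g=f\circ\Phi^{-1}$. But ``$g'\in H^1$ and zero-free'' does not make $g'$ outer. A zero-free $H^1$ function can carry a singular inner factor such as $\exp\brr{-\frac{1+z}{1-z}}$, which is unimodular a.e. on $\Sph^1$.

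This is not a technicality. Keldysh and Lavrentiev constructed rectifiable Jordan domains that are not disks but whose Riemann map satisfies $|f'(e^{i\theta})|=1$ a.e. (non-Smirnov domains). By your own reduction, $\overbar{\sigma}_1(\Omega)=\overbar{\lambda}^N_1(\mathbb{D},d\theta)=2\pi$ for them. So any argument that uses only ``Jordan $+$ rectifiable'' must fail. You must use the hypothesis that $\partial\Omega$ is locally the graph of a continuous function, which you silently replaced by ``Jordan curve''.

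The paper uses this hypothesis as follows. After normalizing $|f'|=1$ a.e., $f'$ is inner. The local graph structure makes $\theta\mapsto\operatorname{Re}\,(c_1^{-1}f(e^{i\theta}))$ monotone on small arcs, so the boundary values of $f'$ there stay in a proper sub-arc of $\Sph^1$. By Seidel's theorem, $f'$ then continues holomorphically across $\Sph^1$, which excludes a singular factor. The maximum principle for $f'$ and $1/f'$ then gives $f'\equiv a$.

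\textbf{Strictness for $k\ge2$.} Your dichotomy ``smooth free boundary harmonic map or bubbling'' describes the weak$^*$ limit of the regularized maximizing sequences built in the existence theory. It says nothing about a \emph{given} maximizer $\rho=|f'|\in L^1$. A priori, such a $\rho$ need not be critical in any sense to which the first-variation and regularity theory apply.

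Proving that every $L^1$ maximizing density is induced by a smooth free boundary harmonic map is exactly the missing step; this is Lemma~\ref{lem:qualitative-steklov-stability}. Starting from the truncations $\min\brc{\rho,\epsilon^{-1}}$, Ekeland's variational principle produces a maximizing sequence of bounded densities with two properties: it has the almost-criticality properties used in the existence proof, \emph{and} it converges to $\rho$ in $L^1$. Hence the compactness and regularity lemmas apply to $\rho$ itself, $\rho$ is smooth, and the smooth case gives $\overbar{\sigma}_k<2\pi k$.

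Your proposed ``concrete first step'' cannot replace this. For a fixed measure, the inequality $\overbar{\lambda}^N_k(\mathbb{D},\mu)<\max_j\brr{\overbar{\lambda}^N_j+\overbar{\lambda}^N_{k-j}}$ is false: for constant density and $k=3$, $\overbar{\lambda}^N_3=4\pi=\overbar{\lambda}^N_1+\overbar{\lambda}^N_2$. For the suprema one has the equality $\mathfrak{S}_k(\mathbb{D})=\mathfrak{S}_1(\mathbb{D})+\mathfrak{S}_{k-1}(\mathbb{D})$, which is precisely why bubbling can occur. The non-strict bound $\overbar{\sigma}_1\le 2\pi$ supplies no strictness to propagate through an induction.

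A minor point: in the smooth case, branched covers $z\mapsto z^m$ of the equatorial disk also occur. They have constant density and $\overbar{\sigma}_k(\mathbb{D},1)=2\pi\lceil k/2\rceil<2\pi k$, so your conclusion there survives, but the claim ``forces $k=1$'' is inaccurate.
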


\subsection{Strategy of the proof}

By the first variation formula (see, for example, \cite[Section~2.3]{Vinokurov:2025:higher-dim-harm-eigenval}), critical measures of $\mu \mapsto \overbar{\lambda}^N_k(\Omega, \mu)$
are related to harmonic maps $u \colon \Omega \to \Sph^\infty$ given by the $k$th eigenfunctions ($\Delta u = \lambda_k u \mu$) satisfying
$\partial_{\nu} u|_{\partial\Omega} = 0$; that is, if $\mu$ is critical, it is proportional to $|\mathrm{d}u|^2$. Moreover, the existence part of the results in \cite{Vinokurov:2025:higher-dim-harm-eigenval} can be generalized
to bounded Lipschitz domains, showing
that $\Lambda^N_k(\Omega)$ is always achieved by a measure $\mu \in L^1(\Omega)$ of the form $\mu = \abs{\mathrm{d}u}^2$ for some harmonic map $u \in H^1(\Omega, \Sph^n)$.
Therefore, it is natural to look for maximizing measures achieving $\Lambda_k^N(\Omega)$ among those induced by harmonic maps $u \colon \Omega \to \Sph^n$.

In the case $\Omega = \mathbb{B}^d$, consider the following example of a (singular) harmonic map
\begin{equation}
  u_0\colon \mathbb{B}^d \to \Sph^{ d-1}\quad \text{given by}\quad u_0(x) = \frac{x}{\abs{x}}
\end{equation}
which is called
\emph{the equator map}. Note that $|\mathrm{d}u_0|^2 = ({d-1})/{|x|^2}$. In particular, $u_0 \in H^1(\mathbb{B}^d,\Sph^{ d-1})$ as long as $d \geq 3$. In
Lemma~\ref{lem:ball-index} below, we show that
\begin{equation}\label{ineq:lambda-energy}
        \overbar{\lambda}^N_1(\mathbb{B}^d, \abs{\mathrm{d}u_0}^2) \leq \int_{\mathbb{B}^d} \abs{\mathrm{d}u_0}^2,
\end{equation}
and the equality is achieved if and only if $d \geq 7$. At the same time, a short calculation yields
\begin{equation}
  \int_{\mathbb{B}^d} \abs{\mathrm{d}u_0}^2 = \tfrac{d(d-1)}{d-2}\omega_{d} \quad \forall d \geq 3.
\end{equation}
We will prove Theorems~\ref{thm:max-lambda-1} and~\ref{thm:max-lambda-2} by proving their extended versions below:
\begin{theorem}\label{thm:max-lambda-11}
    Let $\Omega \subset \R^d$ be an admissible domain such that $|\Omega| = |\mathbb{B}^d|$, and $d \geq 3$. Then for
    any $\mu \in \mathcal{M}^c_+(\overbar{\Omega})$, one has
    \begin{equation}
        \overbar{\lambda}^N_1(\Omega, \mu) \leq \int_{\mathbb{B}^d} \abs{\mathrm{d}u_0}^2.
    \end{equation}
    If the equality is achieved, then $\Omega$ is a.e. isometric to $\mathbb{B}^d$ and $\mu$ is proportional to $\frac{1}{\abs{x}^2}$.
\end{theorem}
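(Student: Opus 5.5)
We plan a Hersch-type center-of-mass argument with the equator map as test map, followed by a rearrangement. Fix $\mu\in\mathcal{M}^c_+(\overbar{\Omega})$ with $\mu(\overbar\Omega)>0$. For $a\in\R^d$ consider the test functions $\phi_i(x)=\frac{x_i-a_i}{\abs{x-a}}$, $i=1,\dots,d$. These are the components of $u_0(\cdot-a)$. They are bounded and lie in $H^1_{loc}$ since $d\ge3$. Their Dirichlet energy is $\int_\Omega\frac{d-1}{\abs{x-a}^2}dx<\infty$. Moreover $\sum_i\phi_i^2\equiv1$ away from the single point $a$, which has $\mu$-measure zero because $\mu$ is nonatomic.

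\textbf{Step 1 (balancing).} We choose $a$ so that $\int_{\overbar\Omega}\phi_i\,d\mu=0$ for all $i$. The map $a\mapsto\int\frac{x-a}{\abs{x-a}}d\mu$ is continuous, by dominated convergence and nonatomicity of $\mu$. For $a$ on a large sphere it points roughly toward $-a$. A degree argument (Brouwer) then yields a zero.

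\textbf{Step 2 (density).} The $\phi_i$ are not in $C^0$. We therefore approximate them by continuous cutoffs near $a$, for instance $\phi_i^\epsilon=\frac{x_i-a_i}{\max(\abs{x-a},\epsilon)}$, re-centered by subtracting their $\mu$-means. The Rayleigh quotients of these converge, again using $\mu(\{a\})=0$. Since $\lambda^N_1(\Omega,\mu)$ is an infimum over two-dimensional spaces, we use the span of $1$ and $\phi_i$ for each $i$. This gives $\lambda^N_1\int\phi_i^2d\mu\le\int_\Omega\abs{d\phi_i}^2$. Summing over $i$ yields
\begin{equation}
\overbar\lambda^N_1(\Omega,\mu)\le\int_\Omega\frac{d-1}{\abs{x-a}^2}\,dx .
\end{equation}

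\textbf{Step 3 (rearrangement).} The function $\abs{x-a}^{-2}$ is radially decreasing about $a$, and $\abs{\Omega}=\omega_d$. By the bathtub principle (Hardy--Littlewood), the right-hand side is at most $\int_{a+\mathbb{B}^d}\frac{d-1}{\abs{x-a}^2}dx=\int_{\mathbb{B}^d}\abs{du_0}^2$. Equality forces $\Omega=a+\mathbb{B}^d$ up to a null set, because the weight is strictly decreasing.

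\textbf{Step 4 (equality case).} Suppose equality holds. Then Step 3 gives $\Omega=a+\mathbb{B}^d$ a.e., and equality must also hold in each inequality of Step 2. So each centered $\phi_i$ is a first eigenfunction in the variational sense. The Euler--Lagrange equation gives $-\Delta\phi_i=\lambda_1\phi_i\mu$ weakly, tested against $\D$ functions. A direct computation gives $-\Delta\frac{x}{\abs x}=\frac{d-1}{\abs x^2}\frac{x}{\abs x}$. Comparing, $\lambda_1\mu=\frac{d-1}{\abs{x-a}^2}$ on the set where some $\phi_i\ne0$, which is everything except $a$. Hence $\mu$ is proportional to $\abs{x-a}^{-2}$.

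The main obstacle is Step 4. First, it requires justifying that equality in the min–max forces the test functions to be genuine weak eigenfunctions, even though they are only limits of $\D$ functions and $\mu$ may be singular. Second, it requires justifying that the singular support of $\mu$ is excluded. For the latter, the identity $\Delta\phi_i=-\lambda_1\phi_i\mu$ as distributions will show that $\mu$ is absolutely continuous, since $\Delta\phi_i$ is an $L^1_{loc}$ function.
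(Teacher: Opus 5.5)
Your proposal is correct and follows the paper's proof essentially step for step: a degree argument to center the equator map, summing the Rayleigh inequalities for the components of $u_0(\cdot-a)$, the bathtub comparison with the ball, and the equality case via the eigenfunction equation for the $u_0^i$ combined with $\abs{u_0}=1$. The justification you flag in Step~4 is supplied by Section~\ref{sec:prelim}, namely $\mu\in\mathfrak{Bil}[H^1(\Omega)]$ and Corollary~\ref{cor:k-th-eigenval-as-inf}, which make the equality-case $u_0^i$ weak Neumann eigenfunctions; also, your interior $L^1_{loc}$ argument alone does not rule out mass of $\mu$ on $\partial\Omega$, which requires testing against all of $\D(\overbar{\Omega})$ and using $\partial_\nu u_0=0$ on the sphere.
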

\begin{theorem}\label{thm:max-lambda-22}
    Let $\Omega \subset \R^d$ be an admissible domain such that $|\Omega| = 2|\mathbb{B}^d|$, and $d \geq 3$. Then for
    any $\mu \in \mathcal{M}^c_+(\overbar{\Omega})$, one has
    \begin{equation}
        \overbar{\lambda}^N_2(\Omega, \mu) \leq 2\int_{\mathbb{B}^d} \abs{\mathrm{d}u_0}^2.
    \end{equation}
    If the equality is achieved, then $\Omega$ is a.e. isometric to $\mathbb{B}^d  \sqcup \mathbb{B}^d$ and $\mu$ is proportional to
    $\frac{1}{\abs{x}^2} \sqcup \frac{1}{\abs{x}^2}$.
\end{theorem}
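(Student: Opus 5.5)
The plan is a Hersch-type argument with a folded equator map as test function. The new ingredients are a rearrangement inequality for its energy and a topological choice of parameters. Fix a nonatomic $\mu$ and an $L^2(\mu)$-normalized first eigenfunction $f_1$ in the variational sense. If $\lambda^N_1$ is not attained, work with a near-minimizing $2$-dimensional space and pass to the limit. For two points $p\neq q\in\R^d$, let $H_{pq}$ be the bisecting hyperplane and $R_{pq}$ the reflection across it. Define
\begin{equation}
 \Phi_{p,q}(x)=\frac{x-p}{\abs{x-p}} \ \text{ if } \abs{x-p}\le\abs{x-q},\qquad
 \Phi_{p,q}(x)=R'_{pq}\Big(\frac{x-q}{\abs{x-q}}\Big) \ \text{ otherwise},
\end{equation}
where $R'_{pq}$ is the linear part of $R_{pq}$. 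On $H_{pq}$ the two formulas agree, so $\Phi_{p,q}\colon\R^d\to\Sph^{d-1}$ is continuous away from $\{p,q\}$. It lies in $H^1_{loc}$ for $d\ge3$, with
\begin{equation}
 \abs{\mathrm{d}\Phi_{p,q}}^2=\frac{d-1}{\min(\abs{x-p},\abs{x-q})^2}.
\end{equation}
The case $p=q$ degenerates to the equator map centered at $p$. To use the components as admissible test functions in $\D(\overbar\Omega)$, I truncate them near $p$ and $q$, e.g. with a radial cutoff at scale $\epsilon$. The truncation changes the energy by $O(\epsilon^{d-2})$. Because $\mu$ is nonatomic, it changes $\int\Phi^2d\mu$ by $o(1)$. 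So all estimates below pass to the limit.

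\emph{Energy bound.} The function $h(x)=\min(\abs{x-p},\abs{x-q})^{-2}$ has superlevel sets $\{h>t\}=B(p,t^{-1/2})\cup B(q,t^{-1/2})$. Their volume is at most $2\omega_d t^{-d/2}$, with equality iff the balls are disjoint. By the bathtub/layer-cake principle, since $|\Omega|=2\omega_d$,
\begin{equation}
 \int_\Omega \abs{\mathrm{d}\Phi_{p,q}}^2 \le \int_{B(p,1)\sqcup B(q',1)} \frac{d-1}{\abs{x-c(x)}^2}=2\int_{\mathbb{B}^d}\abs{\mathrm{d}u_0}^2,
\end{equation}
where $c(x)$ is the center of the ball containing $x$. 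Equality holds only if $\Omega$ coincides a.e. with the union of two disjoint unit balls centered at $p$ and $q$. The same computation with $p=q$ gives the bound for Theorem~\ref{thm:max-lambda-11}.

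\emph{Choice of parameters.} I need $(p,q)$ such that every component $\phi_i=\Phi^i_{p,q}$ satisfies
\[
\int\phi_i\,d\mu=0 \quad\text{and}\quad \int\phi_i f_1\,d\mu=0 .
\]
These are $2d$ equations in $2d$ unknowns. Given them, each $\phi_i$ is orthogonal to $1$ and $f_1$, so $\lambda^N_2\int\phi_i^2d\mu\le\int\abs{\mathrm{d}\phi_i}^2$. Summing over $i$ and using $\sum\phi_i^2=1$ gives
\[
\overbar\lambda^N_2(\Omega,\mu)\le\int_\Omega\abs{\mathrm{d}\Phi_{p,q}}^2\le 2\int_{\mathbb{B}^d}\abs{\mathrm{d}u_0}^2 .
\]
In the equality case, $\Omega$ is two disjoint balls. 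Moreover, each $\phi_i$ must be a second eigenfunction, so $\Delta\phi_i=\lambda_2\phi_i\mu$. Comparing with the harmonic map equation $\Delta\Phi=\abs{\mathrm{d}\Phi}^2\Phi$ forces $\mu\propto\abs{\mathrm{d}\Phi}^2$, which is $\frac{1}{\abs{x}^2}\sqcup\frac{1}{\abs{x}^2}$ after recentering.

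\emph{Main obstacle.} The hard step is showing that the map
\[
F(p,q)=\Big(\int\Phi_{p,q}\,d\mu,\ \int\Phi_{p,q}f_1\,d\mu\Big)\in\R^{2d}
\]
has a zero. Continuity of $F$ in $(p,q)$ again uses that $\mu$ is nonatomic. I would try a degree argument on a large region of the configuration space. First, rewrite $(p,q)$ as midpoint $m$ plus half-difference $v$, possibly allowing $v=0$ as the unfolded equator map. Then:
\begin{itemize}
\item As $\abs{m}\to\infty$ with $v$ bounded, $\Phi\approx-m/\abs{m}$ on $\overbar\Omega$, so the first component behaves like $-m/\abs{m}$.
\item As $\abs{v}\to\infty$, $\overbar\Omega$ lies in one Voronoi cell. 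Combining this with the normalization of $f_1$ should control the second component.
\end{itemize}
If the degree computation on the boundary sphere is awkward, my fallback is a Nadirashvili/Petrides-style argument. That argument first uses $m$ to kill $\int\Phi\,d\mu$. It then treats the remaining $d$ conditions with a Borsuk--Ulam-type argument on the direction of $v$, using that swapping $p\leftrightarrow q$ acts on $\Phi_{p,q}$ by the reflection $R'_{pq}$.
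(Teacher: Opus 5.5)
Your test map is the paper's: $\Phi_{p,q}$ is $u_0(\cdot-p)$ composed with the fold onto $p$'s side of the bisector. Your layer-cake estimate (the superlevel sets of $\min(\abs{x-p},\abs{x-q})^{-2}$ are unions of two balls of radius $t^{-1/2}$) is a correct, self-contained substitute for \cite[Lemma~4.1]{Freitas-Laugesen:2022:two-balls-neumann-max}, equality case included.

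\textbf{The gap.} It is the step you flag yourself. You never show that $F(p,q)=(\int\Phi_{p,q}\,d\mu,\int\Phi_{p,q}f_1\,d\mu)$ has a zero, and this simultaneous $2d$-dimensional orthogonalization is the real content of the theorem. Moreover, the boundary asymptotics on which you propose to base a degree count are wrong. Write $\hat v=v/\abs{v}$ and let $R'_\theta$ be the reflection in $\theta^\perp$.

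(i) As $v\to0$ with $\hat v=\theta$ fixed, $\Phi_{m+v,m-v}$ does not converge to the unfolded equator map. It converges to the equator map at $m$ \emph{folded} across the hyperplane through $m$ orthogonal to $\theta$, which depends on $\theta$. So $F$ does not extend continuously to $v=0$. After blowing up the diagonal you only get the symmetry $F(m,0,-\theta)=(R'_\theta\times R'_\theta)F(m,0,\theta)$ there, with no sign information.

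(ii) As $\abs{v}\to\infty$ with $m$ bounded, $\overbar{\Omega}$ does not lie in one cell, since the bisector passes through $m$. Instead $\Phi_{p,q}\to-\hat v$ uniformly on $\overbar{\Omega}$ on both sides, because $R'_{pq}\hat v=-\hat v$. So the second block tends to $-\hat v\int f_1\,d\mu=0$; it is not ``controlled'' by the normalization of $f_1$.

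(iii) As $\abs{m}\to\infty$ with $v$ fixed, it is false that $\Phi_{p,q}\approx-m/\abs{m}$. If $\langle m,\hat v\rangle$ is large and positive, the domain lies in $q$'s cell and $\Phi_{p,q}\approx-R'_{pq}m/\abs{m}$. In fact $\langle\Phi_{p,q}(x),\hat v\rangle\ge-\abs{v}/\min(\abs{x-p},\abs{x-q})$ everywhere. Hence on a large $m$-sphere the normalized first block stays in $\{\langle y,\hat v\rangle\ge-o(1)\}$, misses a neighbourhood of $-\hat v$, and has degree zero in $m$.

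Point (iii) also defeats your fallback. With $v$ fixed, moving $m$ moves the folding hyperplane together with the center, so there is no topological reason for an $m$ killing $\int\Phi_{p,q}\,d\mu$ to exist. Even when one exists, it need not be unique or continuous in $v$. Also, the swap $p\leftrightarrow q$ acts on the target by the $v$-dependent reflection $R'_{pq}$, not by $-\mathrm{id}$, so Borsuk--Ulam does not apply as stated.

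\textbf{The missing idea.} The paper decouples the center from the hyperplane. It takes $(c,p)\in\overbar{B}\times\overbar{B}$ with fold hyperplane $\partial H_{p,R-\abs{p}}$: there is no fold on $\overbar{\Omega}$ at $p=0$, and the hyperplane passes through $0$ when $\abs{p}=R$.
\begin{itemize}
\item For each fixed hyperplane, $\langle\Phi'(c,p),c\rangle>0$ on $\abs{c}=R$, giving degree one in $c$.
\item On $\abs{p}=R$ one has $\Phi(R_pc,-p)=(R_p\times R_p)\Phi(c,p)$. This is your swap symmetry restricted to hyperplanes through the origin.
\item A radial homotopy makes $\Phi=(c/R,0)$ on $\partial B\times B$. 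Then \cite[Lemma~4.2]{Karpukhin-Stern:2024:new-character-of-conf-eigenval} gives nonzero degree on $\partial(B\times B)\cong\Sph^{2d-1}$, hence a common zero.
\end{itemize}
Such a zero has $c$ in the closed kept half-space, since otherwise $\Phi'$ has a positive normal component. So it corresponds to a point of your family or of its blown-up diagonal, and your energy bound applies to it.

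Finally, a minor point: cutting off near $p,q$ spoils the exact orthogonality you arranged. It is simpler to test directly in $H^1$ via Corollary~\ref{cor:k-th-eigenval-as-inf}, since $\mu$ defines a bounded form on $H^1$.
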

As follows from~\eqref{ineq:lambda-energy}, the equalities are achieved only when $d \geq 7$. Then an extended version of
Corollary~\ref{cor:steklov} is the following:
 \begin{corollary}
     Let $\Omega \subset \R^d$ be an admissible domain with $\abs{\partial \Omega} < \infty$ and $d\geq 3$. Then one has
     \begin{equation}
         \sigma_1(\Omega)\abs{\partial\Omega}\abs{\Omega}^{\frac{2-d}{d}} <  \tfrac{d(d-1)}{d-2}\omega_{d}^{2/d},
         \quad\quad
         \sigma_2(\Omega)\abs{\partial\Omega}\abs{\Omega}^{\frac{2-d}{d}} < \tfrac{d(d-1)}{d-2}(2\omega_{d})^{2/d},
     \end{equation}
     and the inequalities are sharp if $d \geq 7$.
 \end{corollary}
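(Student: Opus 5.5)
The plan is to reduce both inequalities to Theorems~\ref{thm:max-lambda-11} and~\ref{thm:max-lambda-22} by scaling. Strictness will come from the rigidity part of those theorems, and sharpness for $d\ge 7$ from Proposition~\ref{prop:steklov-approaches} together with the equality case of~\eqref{ineq:lambda-energy}.

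\emph{Step 1 (scaling).} Under a dilation $x\mapsto rx$ we have $\overbar{\sigma}_k(r\Omega)=\overbar{\lambda}^N_k(r\Omega,\mathcal{H}^{d-1}|_{\partial(r\Omega)}) = r^{d-2}\,\overbar{\sigma}_k(\Omega)$. Indeed, the Dirichlet energy scales by $r^{d-2}$, while the measure appears homogeneously of degree zero in $\overbar{\lambda}^N_k$. Also $\abs{r\Omega}^{\frac{2-d}{d}} = r^{2-d}\abs{\Omega}^{\frac{2-d}{d}}$, so $\sigma_k(\Omega)\abs{\partial\Omega}\abs{\Omega}^{\frac{2-d}{d}}$ is scale invariant. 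We may therefore assume $\abs{\Omega}=\omega_d$ when $k=1$ and $\abs{\Omega}=2\omega_d$ when $k=2$. Dilation preserves admissibility and finiteness of $\abs{\partial\Omega}$.

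\emph{Step 2 (upper bound and strictness).} Set $\mu=\mathcal{H}^{d-1}|_{\partial\Omega}$.
\begin{itemize}
\item Since $\mathcal{H}^{d-1}(\partial\Omega)<\infty$, $\mu$ is a finite Radon measure on $\overbar{\Omega}$, and it is nonatomic because $\mathcal{H}^{d-1}$ of a point is zero. Hence $\mu\in\mathcal{M}^c_+(\overbar{\Omega})$.
\item Theorem~\ref{thm:max-lambda-11} gives $\overbar{\sigma}_1(\Omega)\le \int_{\mathbb{B}^d}\abs{\mathrm{d}u_0}^2=\tfrac{d(d-1)}{d-2}\omega_d$. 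Theorem~\ref{thm:max-lambda-22} gives $\overbar{\sigma}_2(\Omega)\le \tfrac{d(d-1)}{d-2}(2\omega_d)$.
\item Multiplying by $\omega_d^{(2-d)/d}$, respectively $(2\omega_d)^{(2-d)/d}$, yields exactly the constants $\tfrac{d(d-1)}{d-2}\omega_d^{2/d}$ and $\tfrac{d(d-1)}{d-2}(2\omega_d)^{2/d}$.
\item Suppose equality held. If $\mu=0$ then $\overbar{\sigma}_k=0$, which is not the bound. Otherwise the rigidity statements force $\mu$ to be a nonzero multiple of $\abs{x}^{-2}$ (or of $\abs{x}^{-2}\sqcup\abs{x}^{-2}$). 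Such a measure is absolutely continuous with respect to Lebesgue measure and has full support.
\item But $\mathcal{H}^{d-1}(\partial\Omega)<\infty$ implies $\mathcal{L}^d(\partial\Omega)=0$, so $\mu$ is singular with respect to Lebesgue measure. This contradiction gives the strict inequalities for every $d\ge3$.
\end{itemize}

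\emph{Step 3 (sharpness for $d\ge7$).} Apply Proposition~\ref{prop:steklov-approaches} to $\Omega=\mathbb{B}^d$ for $k=1$, and to $\Omega=\mathbb{B}^d\sqcup\mathbb{B}^d$ (two disjoint balls, still a bounded $C^1$ domain) for $k=2$.
\begin{itemize}
\item This produces $C^1$ domains $\Omega^\epsilon\subset\Omega$, which are admissible with finite boundary measure, such that $\abs{\Omega^\epsilon}\to\abs{\Omega}$ and $\overbar{\sigma}_k(\Omega^\epsilon)\to\sup_{\mu\in L^1_+(\Omega)}\overbar{\lambda}^N_k(\Omega,\mu)$.
\item Since $\abs{x}^{-2}\in L^1(\mathbb{B}^d)$ for $d\ge3$, this supremum is at least $\overbar{\lambda}^N_1(\mathbb{B}^d,\abs{\mathrm{d}u_0}^2)$. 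By the equality case of~\eqref{ineq:lambda-energy} (Lemma~\ref{lem:ball-index}) this equals $\int_{\mathbb{B}^d}\abs{\mathrm{d}u_0}^2$ when $d\ge7$.
\item For $k=2$, the disjoint-union remark before Theorem~\ref{thm:max-lambda-2} gives twice this value. Combined with Step~2, the limit equals the bound, so the normalized quantities of $\Omega^\epsilon$ converge to the stated constants.
\end{itemize}

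The main point requiring care is the strictness argument in Step~2. It needs the rigidity conclusion of Theorems~\ref{thm:max-lambda-11}--\ref{thm:max-lambda-22} to hold for arbitrary nonatomic measures, not just absolutely continuous ones, and it uses the fact that finite $\mathcal{H}^{d-1}$-measure of $\partial\Omega$ makes the boundary Lebesgue-null. The remaining steps are bookkeeping once Proposition~\ref{prop:steklov-approaches} is accepted for disconnected $C^1$ domains.
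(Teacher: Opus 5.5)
Your proposal is correct and follows the route the paper intends. You rescale, apply Theorems~\ref{thm:max-lambda-11} and~\ref{thm:max-lambda-22} to $\mu=\mathcal{H}^{d-1}|_{\partial\Omega}$, and get strictness because equality would force $\mu$ to be a multiple of $|x|^{-2}\,\mathrm{d}x$, while the boundary measure is Lebesgue-singular; sharpness for $d\ge 7$ then comes from Proposition~\ref{prop:steklov-approaches} combined with Lemma~\ref{lem:ball-index}.

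The two caveats you flag are harmless. The theorems are stated for every $\mu\in\mathcal{M}^c_+(\overbar{\Omega})$. For $k=2$ you could also apply the proposition to a single ball and take two disjoint copies of $\Omega^\epsilon$, which gives the value $2\overbar{\sigma}_1(\Omega^\epsilon)$.
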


The proof of Theorem~\ref{thm:max-lambda-11} follows the approach that Weinberger \cite{Weinberger:1956:first-neumann} used to show that the ball maximizes the first Neumann
eigenvalue among Euclidean domains. We use the coordinate functions $u_0^i$ as trial functions in the variational characterization of $\lambda_1$ and then apply
a topological argument to find a new origin $c \in \R^d$, so that all $u_0^i(\cdot-c)$ are orthogonal to constants. The proof of Theorem~\ref{thm:max-lambda-22}
is inspired by works (\cite{Bucur-Henrot:2019:second-neumann, Freitas-Laugesen:2022:two-balls-neumann-max, Girouard-Nadirashvili-Polterovich:2009:planar-second-neumann}),
which show that the second Neumann eigenvalue is maximized by the union of two balls (cf. also \cite{Nadirashvili:2002:second-eigen-sphere,Kim:2022:second-sphere-eigenval}).
We consider the composition of $u_0(\cdot - c)$ with a fold map across a hyperplane. By parametrizing the space of translations and hyperplanes as in
\cite{Freitas-Laugesen:2022:two-balls-neumann-max} and using \cite[Lemma~4.2]{Karpukhin-Stern:2024:new-character-of-conf-eigenval}, we construct
trial functions that are simultaneously orthogonal to constants and to the first eigenfunction, thereby avoiding a two-step orthogonalization procedure.

While Theorem~\ref{thm:max-lambda-1} describes all the pairs $(\Omega, \mu)$ maximizing the first weighted Neumann eigenvalue when $d \geq 7$, the situation
$3\leq d \leq 6$ remains unclear. As was already discussed, \cite{Vinokurov:2025:higher-dim-harm-eigenval} implies that $\Lambda_k^N(\Omega)$ is achieved by a measure
which is given by the energy density of a harmonic map into a sphere. In the context of closed manifolds, such harmonic maps are always smooth if $3\leq d \leq 6$
(see \cite[Corollary~1.3]{Vinokurov:2025:higher-dim-harm-eigenval}). It is natural to expect a similar result for domains if $\Omega$ is smooth.
\begin{question}
  Let $\Omega \subset \R^d$ range over admissible domains with $3\leq d\leq6$. Does the following equality hold?
  \begin{equation}
    \sup_{|\Omega| = |\mathbb{B}^d|} \Lambda_1^N(\Omega) = \Lambda_1^N(\mathbb{B}^d).
  \end{equation}
\end{question}
One may first attempt to find a (presumably smooth) harmonic map $u \colon \mathbb{B}^d \to \Sph^n$ realizing $\Lambda_1^N(\mathbb{B}^d)$, and then
its structure should suggest a variation of the topological argument to be used for the upper bound.

The proof of Theorem~\ref{thm:simply-connected-steklov} consists of two ingredients. First, we note that when the boundary is a rectifiable Jordan curve,
the Riesz--Privalov theorem produces a conformal homeomorphism $f \colon \overbar{\mathbb{D}}^2 \to \overbar{\Omega}$ that is absolutely
continuous on the boundary. In particular, we reduce the problem to estimating the weighted Steklov eigenvalues $\overbar{\sigma}_k(\mathbb{D}^2, \rho)$ when
$\rho \in L^1(\partial \mathbb{D}^2)$. The second ingredient is that all maximizing
densities $\rho \in L^1(\partial \mathbb{D}^2)$ are actually induced by smooth free boundary harmonic maps $u \colon \mathbb{D}^2 \to \mathbb{B}^n$
(see Lemma~\ref{lem:qualitative-steklov-stability}).

The remainder of the paper is devoted to the proofs of \eqref{ineq:lambda-energy}, Theorems~\ref{thm:max-lambda-11},~\ref{thm:max-lambda-22},
and Theorem~\ref{thm:simply-connected-steklov}.

\subsection{Acknowledgments}
This work forms part of the author's PhD thesis, carried out under the supervision of Mikhail Karpukhin and Iosif Polterovich.
The author is grateful to both for their guidance and many fruitful discussions, and especially to Mikhail Karpukhin for suggesting the problem.
The author was partially supported by an ISM scholarship. Part of this work was completed during the programme
\emph{Geometric spectral theory and applications} at the Isaac Newton Institute for Mathematical Sciences, Cambridge,
whose support and hospitality are gratefully acknowledged. This programme was supported by EPSRC grant EP/Z000580/1.

\section{Preliminaries}\label{sec:prelim}
\begin{lemma}\label{lem:unstable-ring}
  Let $\Omega \subset M$ be a domain of a Riemannian manifold $(M,g)$ of dimension $d \geq 2$, and let $0\neq\mu \in \mathcal{M}_+^{c}(\overbar{\Omega})$ with $\lambda^N_k(\Omega, \mu) =1$ for some $k > 0$.
  Then every point $p \in \overbar{\Omega}$ has a neighborhood $U$ such that for all
    $\phi \in \D(\overbar{\Omega})$ with $\supp \phi \subset U \cap\overbar{\Omega}$,
    one has
    \begin{equation}\label{eq:unstable-ring}
      \int_{\overbar{\Omega}} \phi^2 \mathrm{d}\mu \leq \int_{\Omega} \abs{\mathrm{d}\phi}^2.
    \end{equation}
\end{lemma}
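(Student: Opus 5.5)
We argue by contradiction, using only the variational characterization \eqref{eq:eigen-var-char-metric} and the fact that $\mu$ has no atoms. Suppose some point $p\in\overbar{\Omega}$ admits no such neighborhood. Fix a sequence of radii $r_j\to 0$. For each $j$ there is then $\phi_j\in\D(\overbar{\Omega})$ with $\supp\phi_j\subset B_{r_j}(p)\cap\overbar{\Omega}$ and
\begin{equation}
  \int_{\overbar{\Omega}}\phi_j^2\,\mathrm{d}\mu>\int_\Omega\abs{\mathrm{d}\phi_j}^2 .
\end{equation}
The goal is to assemble $k+1$ such test functions with pairwise disjoint supports. Their span $V_{k+1}$ would be $(k+1)$-dimensional, and on it the Rayleigh quotient would be strictly less than $1$. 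Indeed, for $\phi=\sum_i a_i\phi_{j_i}$, disjointness of supports makes both $\int_\Omega|\mathrm{d}\phi|^2$ and $\int\phi^2\,\mathrm{d}\mu$ split as sums, and each summand satisfies the strict inequality. Since $V_{k+1}$ is finite dimensional, its unit sphere is compact, so the supremum of the quotient over $V_{k+1}$ is $<1$. Hence $\lambda_k^N(\Omega,\mu)<1$, a contradiction.

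Disjoint supports are obtained by cutting off near $p$, and this is the main obstacle. Given $\phi$ violating \eqref{eq:unstable-ring} and supported in $B_r(p)$, multiply it by a logarithmic cutoff $\eta_\delta$ with $\eta_\delta=0$ on $B_{\delta^2}(p)$, $\eta_\delta=1$ outside $B_\delta(p)$, and $\int|\mathrm{d}\eta_\delta|^2\to0$ as $\delta\to0$. In dimension $d\geq 2$ this capacity estimate holds (in a chart, with the metric comparable to Euclidean), and $\eta_\delta\phi$ stays in $\D(\overbar{\Omega})$. The error is controlled by
\begin{equation}
  \int|\mathrm{d}(\eta_\delta\phi)|^2\le(1+\epsilon)\int|\mathrm{d}\phi|^2+(1+\epsilon^{-1})\norm{\phi}_\infty^2\int|\mathrm{d}\eta_\delta|^2 ,
\end{equation}
and for fixed $\epsilon$ the second term tends to $0$ as $\delta\to0$. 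Meanwhile $\int(\eta_\delta\phi)^2\,\mathrm{d}\mu\to\int\phi^2\,\mathrm{d}\mu$ because $\mu(\{p\})=0$; this is exactly where nonatomicity enters. Choosing $\epsilon$ and then $\delta$ small, the strict inequality survives, so we may take the violating function supported in the annulus $\{\delta^2\le|x-p|\le r\}$.

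We now proceed inductively. Start with $\psi_0$ supported in an annulus $A_0=\{\delta_0^2\le|x-p|\le r_0\}$. Pick $r_1<\delta_0^2$ and a violating function in $B_{r_1}(p)$, which exists because $p$ is bad at every scale. Cut it off to get $\psi_1$ supported in an annulus $A_1$ inside $B_{r_1}(p)$, hence disjoint from $A_0$. Repeating $k$ times yields $\psi_0,\dots,\psi_k$ with disjoint supports, which completes the argument above.

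One subtlety to check is that each $\psi_i\neq0$, which is automatic since $\int\psi_i^2\,\mathrm{d}\mu>0$. It also justifies the normalization $\lambda_k^N(\Omega,\mu)=1$, and in particular $k>0$ and $\mu\neq0$, as the condition under which the contradiction is meaningful.
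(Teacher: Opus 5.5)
Your proof is correct and follows the paper's argument. The paper also reduces to functions supported away from $p$, using that points have zero capacity for $d\geq 2$ and that $\mu(\{p\})=0$. It then obtains a contradiction from $k+1$ violating functions supported in disjoint annuli $B_{r_i}(p)\setminus\overline{B_{r_{i+1}}(p)}$; your logarithmic cutoff applied to the violators simply makes that capacity step explicit.
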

\begin{proof}
  Note that it suffices to prove \eqref{eq:unstable-ring} only for $\supp \phi \subset (U\setminus\brc{p}) \cap\overbar{\Omega}$, since
  discrete sets have zero capacity and $\mu(\brc{p}) = 0$.

  Arguing by contradiction, we can find a sequence of functions $\phi_i \in \D(\overbar{\Omega})$ with disjoint supports
  $\supp \phi_i \subset U_i \cap\overbar{\Omega}$, where $U_i := B_{r_i}(p)\setminus\overline{B_{r_{i+1}}(p)}$ and $r_i \searrow 0$, such that
  \begin{equation}
    \int_{\Omega} \abs{\mathrm{d}\phi_i}^2 - \int_{\overbar{\Omega}} \phi_i^2 \mathrm{d}\mu < 0.
  \end{equation}
  This yields a contradiction with the variational characterization of $\lambda^N_k(\Omega, \mu)=1$ as long as we have $k+1$ such functions.
  Therefore, \eqref{eq:unstable-ring} holds if we take $U := B_{r_{k+1}}(p)$.
\end{proof}
\begin{proposition}
    Let $\Omega \subset M$ be a bounded domain of a Riemannian manifold $(M,g)$, and let $0\neq\mu \in \mathcal{M}_+^{c}(\overbar{\Omega})$. If
    $\D(\overbar{\Omega})$ is dense in $H^1(\Omega)$ and $\lambda^N_k(\Omega, \mu) \neq 0$ for some $k > 0$, then the measure
    $\mu$ induces a continuous bilinear form on $H^1(\Omega)$, that is, $\mu \in \mathfrak{Bil}[H^1(\Omega)]$.
\end{proposition}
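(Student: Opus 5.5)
We need to show that there is a constant $C$ with
\begin{equation}
\int_{\overbar{\Omega}} \phi^2 \,\mathrm{d}\mu \le C\norm{\phi}_{H^1(\Omega)}^2 \qquad \text{for all } \phi \in \D(\overbar{\Omega}).
\end{equation}
Once this holds, the form $(\phi,\psi)\mapsto\int \phi\psi\,\mathrm{d}\mu$ extends by density of $\D(\overbar{\Omega})$ in $H^1(\Omega)$ (polarization plus Cauchy--Schwarz) to a continuous bilinear form on $H^1(\Omega)$. The plan is to localize with Lemma~\ref{lem:unstable-ring} and then patch with a partition of unity.

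\textbf{Step 1 (normalization).} The quantity $\lambda^N_k(\Omega,\mu)$ scales as $\lambda^N_k(\Omega,t\mu)=t^{-1}\lambda^N_k(\Omega,\mu)$. Since $\mu\neq 0$ and $\mu$ is continuous, $L^2(\mu)$ is infinite-dimensional, so $\lambda^N_k(\Omega,\mu)<\infty$. Together with the hypothesis $\lambda^N_k(\Omega,\mu)\neq0$, we may replace $\mu$ by $t\mu$ with $t=\lambda^N_k(\Omega,\mu)$ and assume $\lambda^N_k(\Omega,\mu)=1$. This only changes $C$ by a constant factor.

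\textbf{Step 2 (local bound).} Apply Lemma~\ref{lem:unstable-ring} at every $p\in\overbar{\Omega}$. This gives open neighborhoods $U_p$ on which every $\phi\in\D(\overbar{\Omega})$ with $\supp\phi\subset U_p\cap\overbar{\Omega}$ satisfies $\int\phi^2\,\mathrm{d}\mu\le\int_\Omega|\mathrm{d}\phi|^2$.

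\textbf{Step 3 (compactness and partition of unity).} Since $\Omega$ is bounded in $M$, I take $\overbar{\Omega}$ to be compact; this is implicit in the setting. Extract a finite subcover $U_1,\dots,U_N$ and choose smooth functions $\eta_j$ on $M$ with $\supp\eta_j\subset U_j$ and $\sum_j\eta_j^2\equiv1$ on $\overbar{\Omega}$. For instance, take a partition of unity $\chi_j$ and set $\eta_j=\chi_j/(\sum_i\chi_i^2)^{1/2}$, which is well defined near $\overbar{\Omega}$. For $\phi\in\D(\overbar{\Omega})$, each product $\eta_j\phi$ lies in $\D(\overbar{\Omega})$ and is supported in $U_j\cap\overbar{\Omega}$. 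Then
\begin{equation}
\int\phi^2\,\mathrm{d}\mu=\sum_j\int(\eta_j\phi)^2\,\mathrm{d}\mu\le\sum_j\int_\Omega|\mathrm{d}(\eta_j\phi)|^2\le 2\sum_j\int_\Omega\brr{\eta_j^2|\mathrm{d}\phi|^2+|\mathrm{d}\eta_j|^2\phi^2}\le 2\int_\Omega|\mathrm{d}\phi|^2+2L\int_\Omega\phi^2,
\end{equation}
where $L=\max_{\overbar{\Omega}}\sum_j|\mathrm{d}\eta_j|^2$. This is the desired bound with $C=2\max(1,L)$.

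\textbf{Step 4 (extension).} The bound controls $\norm{\phi}_{L^2(\mu)}$ by $\norm{\phi}_{H^1}$ on the dense subspace $\D(\overbar{\Omega})$. Hence the map $\phi\mapsto\phi$ into $L^2(\overbar{\Omega},\mu)$ extends uniquely and continuously to all of $H^1(\Omega)$, and the bilinear form is continuous. This is exactly $\mu\in\mathfrak{Bil}[H^1(\Omega)]$.

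\textbf{Main obstacle.} The only delicate points are that Lemma~\ref{lem:unstable-ring} needs the normalization $\lambda^N_k(\Omega,\mu)=1$, which is where $\lambda^N_k(\Omega,\mu)\neq0$ enters, and that the cut-offs $\eta_j\phi$ must stay in $\D(\overbar{\Omega})$. The latter is immediate because the $\eta_j$ are smooth on $M$ and therefore Lipschitz on the compact set $\overbar{\Omega}$.
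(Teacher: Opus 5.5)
Your proposal is correct and follows essentially the paper's argument: normalize, localize with Lemma~\ref{lem:unstable-ring}, patch with a finite partition of unity satisfying $\sum_j\eta_j^2=1$ on $\overbar{\Omega}$, and extend by density. The only difference is cosmetic. The paper expands $|\mathrm{d}(\eta_j\phi)|^2$ exactly and notes that the cross term $\tfrac12\sum_j\langle \mathrm{d}\eta_j^2,\mathrm{d}\phi^2\rangle$ vanishes, which gives constant $1$ in front of $\int_\Omega|\mathrm{d}\phi|^2$; your factor $2$ is harmless, since only continuity is needed.
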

\begin{proof}
    By Lemma~\ref{lem:unstable-ring}, every point $p \in \overbar{\Omega}$ has a neighborhood $U$ such that for all
    $\phi \in \D(\overbar{\Omega})$ with $\supp \phi \subset U \cap\overbar{\Omega}$, one has
    \begin{equation}
        \lambda^N_k(\Omega, \mu)\int_{\overbar{\Omega}} \phi^2 \mathrm{d}\mu \leq \int_{\Omega} \abs{\mathrm{d}\phi}^2.
    \end{equation}
    Then a partition of unity argument with $\sum_i \eta_i^2 = 1$,
    $\supp \eta_i \subset U_i$, and $\overbar{\Omega} \subset \bigcup_i U_i$, implies
    that for all $\phi \in \D(\overbar{\Omega})$,
    \begin{equation}
        \lambda^N_k(\Omega, \mu)\int_{\overbar{\Omega}} \phi^2 \mathrm{d}\mu \leq \int_{\Omega} \abs{\mathrm{d}\phi}^2
        + \frac{1}{2} \sum_i \int_{\Omega} \brt{\mathrm{d}\eta_i^2, \mathrm{d}\phi^2} + \sum_i \int_{\Omega} \phi^2\abs{\mathrm{d}\eta_i}^2,
    \end{equation}
    where the middle sum vanishes. Hence, there exists a constant $C > 0$ such that
    \begin{equation}
        \lambda^N_k(\Omega, \mu)\int_{\overbar{\Omega}} \phi^2 \mathrm{d}\mu \leq \int_{\Omega} \abs{\mathrm{d}\phi}^2 + C\int_{\Omega} \phi^2.
    \end{equation}
\end{proof}
Thus, the identity map $\D(\overbar{\Omega})\to L^2(\overbar{\Omega},\mu)$ induces a continuous linear map $H^1(\Omega) \to L^2(\overbar{\Omega},\mu)$. Integration with respect to $\mu$ will be understood via this map.
We will need the following abstract version of the Poincaré inequality:
\begin{lemma}[{\cite[Lemma~4.1.3]{Ziemer:1989:weakly-diff-func}}]
  \label{lem:poincare-ineq}
  Let $X_0$ be a normed space with norm $\norm{\cdot}_0$, and let $X \subset X_0$ be a Banach space with
  norm $\norm{\cdot}$. Assume that $\norm{\cdot}= \norm{\cdot}_0 + \norm{\cdot}_1$ for some semi-norm $\norm{\cdot}_1$, and that the embedding
  $X \hookrightarrow X_0$ is compact. Let $Y = \set{x \in X}{\norm{x}_1 = 0}$. Then there exists a constant $C > 0$ such that
  for any projection $P \colon X \to Y$, one has
  \begin{equation}
    \norm{x - Px}_0 \leq C \norm{P} \norm{x}_1 \quad \forall x \in X.
  \end{equation}
\end{lemma}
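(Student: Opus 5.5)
This is the classical compactness (Rellich-type) argument by contradiction. Since $\norm{\cdot}_1$ is a semi-norm and $\norm{\cdot}_0$ a norm, $Y$ is a closed subspace of $X$. Indeed, it is the kernel of the continuous seminorm $\norm{\cdot}_1 \le \norm{\cdot}$.

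First I will show that $Y$ is finite-dimensional. On $Y$ we have $\norm{y} = \norm{y}_0$, so the unit ball of $(Y,\norm{\cdot})$ coincides with its unit ball in $\norm{\cdot}_0$. Compactness of the embedding says this ball is relatively compact in $X_0$. A bounded sequence in $Y$ therefore has a subsequence that is Cauchy in $\norm{\cdot}_0$, hence Cauchy in $\norm{\cdot}$ on $Y$, and it converges in $Y$ because $Y$ is closed in the Banach space $X$. So $Y$ has a compact unit ball and $\dim Y < \infty$.

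Next I reduce to a single projection. Let $P$ be any bounded projection onto $Y$ (if $\norm{P}=\infty$ there is nothing to prove). Since $x - Px$ lies in $\ker P$, and $\norm{x - Px}_1 = \norm{x}_1$ because $\norm{Px}_1 = 0$ and the triangle inequality applies both ways, it suffices to prove a uniform inequality
\begin{equation}
  \norm{z}_0 \le C_0\, \operatorname{dist}_{\norm{\cdot}}(z,Y)\ \text{-type bound}, \quad\text{more precisely}\quad \inf_{y\in Y}\norm{x-y} \le C_1\norm{x}_1 \quad \forall x \in X.
\end{equation}
Granting this, pick $y$ with $\norm{x-y}\le 2C_1\norm{x}_1$. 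Then $x - Px = (x-y) - P(x-y)$, so
\begin{equation}
  \norm{x-Px}_0 \le \norm{x-Px} \le (1+\norm{P})\norm{x-y} \le 2C_1(1+\norm{P})\norm{x}_1.
\end{equation}
Finally $1 + \norm{P} \le 2\norm{P}$, since $\norm{P} \ge 1$ for a nonzero projection. (If $Y = 0$, then $\norm{\cdot}_1$ is itself a norm and the argument below gives the result directly.)

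The main step is the distance estimate, which I will prove by contradiction. Suppose there are $x_n$ with $\operatorname{dist}(x_n,Y) = 1$ and $\norm{x_n}_1 \to 0$. By finite-dimensionality of $Y$ the distance is attained, so we may shift each $x_n$ by an element of $Y$ so that $\norm{x_n} \le 2$ while keeping $\operatorname{dist}(x_n,Y)=1$ and $\norm{x_n}_1$ unchanged. By compactness, a subsequence converges in $\norm{\cdot}_0$. Since $\norm{x_n - x_m}_1 \to 0$, the subsequence is Cauchy in $\norm{\cdot} = \norm{\cdot}_0 + \norm{\cdot}_1$, so it converges in $X$ to some $x$. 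Then $\norm{x}_1 = 0$, so $x \in Y$, while continuity of the distance gives $\operatorname{dist}(x,Y) = 1$, a contradiction.

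The only delicate point I expect is this last argument: one must first normalize by the distance to $Y$ and use that $Y$ is finite-dimensional (hence the distance is attained, giving bounded representatives) before compactness can be applied. After that, the rest is bookkeeping with the constant $C$.
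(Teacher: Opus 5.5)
Your proof is correct, and it is essentially the standard compactness--contradiction argument behind Ziemer's Lemma~4.1.3, which the paper cites without reproving: $Y$ is finite-dimensional because its unit ball is compact, the bound $\mathrm{dist}(x,Y)\le C_1\norm{x}_1$ follows by contradiction from the compact embedding, and the identity $x-Px=(x-y)-P(x-y)$ together with $\norm{P}\ge 1$ finishes the proof. Two minor remarks: finite-dimensionality of $Y$ is not actually needed to get bounded representatives, since the definition of the infimum already gives $y_n\in Y$ with $\norm{x_n-y_n}\le 2$; and if $Y=\{0\}$ then $P=0$, so the inequality with the factor $\norm{P}=0$ is false for every $x\neq 0$, and your parenthetical only gives $\norm{x}_0\le C_1\norm{x}_1$ in that case---this is harmless for the paper, where $Y$ contains the constants.
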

\begin{corollary}
  Let $\Omega$ be an admissible connected domain and $0\neq\mu \in \mathcal{M}_+^{c}(\overbar{\Omega})$. If $k > 0$ and  $\lambda^N_k(\Omega, \mu) \neq 0$,
  then the norm
  \begin{equation}
        \norm{\phi}^2_* := \int_{\Omega} \abs{\mathrm{d}\phi}^2 + \int_{\overbar{\Omega}} \phi^2 \mathrm{d}\mu
  \end{equation}
  is an equivalent norm on $H^1(\Omega)$.
\end{corollary}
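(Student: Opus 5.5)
We must show that $\norm{\cdot}_*$ and the standard norm $\norm{\phi}^2_{H^1} = \int_\Omega \abs{\mathrm{d}\phi}^2 + \int_\Omega \phi^2$ are equivalent on $H^1(\Omega)$.

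\emph{Upper bound.} One direction is immediate from the preceding Proposition. Since $\D(\overbar{\Omega})$ is dense and $\lambda^N_k(\Omega,\mu)\neq 0$, the measure $\mu$ induces a continuous bilinear form on $H^1(\Omega)$. So there is $C$ with $\int_{\overbar{\Omega}} \phi^2\,\mathrm{d}\mu \leq C\norm{\phi}^2_{H^1}$, and hence $\norm{\phi}_* \leq C'\norm{\phi}_{H^1}$.

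\emph{Lower bound via Lemma~\ref{lem:poincare-ineq}.} For the reverse direction I will apply Lemma~\ref{lem:poincare-ineq} with the following data:
\begin{itemize}
\item $X = H^1(\Omega)$ and $X_0 = L^2(\Omega)$, with $\norm{\cdot}_0 = \norm{\cdot}_{L^2}$ and $\norm{\phi}_1 = \norm{\mathrm{d}\phi}_{L^2}$;
\item the norm $\norm{\cdot}_0+\norm{\cdot}_1$ is equivalent to the $H^1$ norm;
\item the embedding $X\hookrightarrow X_0$ is compact by admissibility.
\end{itemize}
Since $\Omega$ is connected, $Y=\set{\phi}{\mathrm{d}\phi = 0}$ consists of the constants.

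The lemma applies to any bounded projection onto $Y$, so I choose the $\mu$-average
\begin{equation}
P\phi = \frac{1}{\mu(\overbar{\Omega})}\int_{\overbar{\Omega}} \phi\,\mathrm{d}\mu .
\end{equation}
This is well defined because $\mu\neq 0$ and $\mu(\overbar{\Omega})<\infty$, as $\mu$ is a Radon measure on a compact set. It is a projection onto the constants, since $P1=1$. It is bounded on $H^1$ because
\begin{equation}
\abs{P\phi}\le \mu(\overbar{\Omega})^{-1/2}\norm{\phi}_{L^2(\mu)}\le C\norm{\phi}_{H^1},
\end{equation}
using the Proposition.

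Lemma~\ref{lem:poincare-ineq} then gives $\norm{\phi - P\phi}_{L^2(\Omega)} \le C\norm{\mathrm{d}\phi}_{L^2}$. Therefore
\begin{equation}
\norm{\phi}_{L^2(\Omega)} \le C\norm{\mathrm{d}\phi}_{L^2} + \abs{P\phi}\,\abs{\Omega}^{1/2} \le C\norm{\mathrm{d}\phi}_{L^2} + \abs{\Omega}^{1/2}\mu(\overbar{\Omega})^{-1/2}\norm{\phi}_{L^2(\mu)},
\end{equation}
where $\abs{\Omega}<\infty$ because $\Omega$ is bounded. Squaring and adding $\norm{\mathrm{d}\phi}^2$ yields $\norm{\phi}_{H^1}\le C\norm{\phi}_*$.

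\emph{Main obstacle.} The key point is making $P$ bounded on all of $H^1(\Omega)$, not only on the dense subspace $\D(\overbar{\Omega})$. This is exactly what the preceding Proposition provides: the trace map $H^1\to L^2(\mu)$ is continuous, and integration against $\mu$ is understood through it.

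A positive-definiteness check is not separately needed, because the inequality just proved already implies it. Indeed, if $\norm{\phi}_*=0$ then $\norm{\phi}_{H^1}=0$. (Directly: $\phi$ would be constant with $\int\phi^2\,\mathrm{d}\mu=0$, so $\phi=0$.)
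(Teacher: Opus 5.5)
Your proposal is correct and follows essentially the same route as the paper: the preceding Proposition gives continuity of $\mu$ on $H^1(\Omega)$, hence the upper bound, and Lemma~\ref{lem:poincare-ineq} applied with the $\mu$-average projection onto constants gives the lower bound. You also spell out details the paper leaves implicit, namely the boundedness of $P$ via Cauchy--Schwarz and the positive-definiteness of $\norm{\cdot}_*$.
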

\begin{proof}
  By the previous proposition, we have $\mu \in \mathfrak{Bil}[H^1(\Omega)]$. So, it remains to prove that $\norm{\cdot}_*$ is bounded from below. Consider
  a projection to constant functions $P\colon H^1(\Omega) \to \R\subset H^1(\Omega)$ given by
  \begin{equation}
    P\phi = \dashint_{\overbar{\Omega}} \phi \mathrm{d}\mu
  \end{equation}
  and apply Lemma~\ref{lem:poincare-ineq}. Since the embedding $H^1(\Omega) \to L^2(\Omega)$ is compact, we obtain
  \begin{equation}
    \norm{\phi}_{L^2} \leq \norm{P\phi}_{L^2} + \norm{\phi - P\phi}_{L^2}
    \leq C\brr{\norm{\phi}_{L^2(\mu)} + \norm{\mathrm{d}\phi}_{L^2}}.
  \end{equation}
\end{proof}
\begin{corollary}\label{cor:k-th-eigenval-as-inf}
    Let $\Omega$ be an admissible domain and $0\neq\mu \in \mathcal{M}_+^{c}(\overbar{\Omega})$. If $k > 0$ and  $\lambda^N_k(\Omega, \mu) \neq 0$, then
    there exists a subspace $V \subset H^1(\Omega)$ such that
    $1 \in V$, $\dim V \leq k$, and
    \begin{equation}\label{eq:var-char}
        \lambda^N_k(\Omega, \mu) = \inf \set*{\frac{\int_{\Omega} \abs{\mathrm{d}\phi}^2}{\int_{\overbar{\Omega}} \phi^2 \mathrm{d}\mu}}{\phi \in H^1\setminus\brc{0},\ \int_{\overbar{\Omega}} \phi \psi \mathrm{d}\mu = 0 \ \forall \psi \in V}.
    \end{equation}
    In fact, $V = \bigoplus_{\lambda < \lambda_k} V_{\lambda}$.
\end{corollary}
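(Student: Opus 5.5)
The plan is to set up the spectral theory of the form $Q(\phi)=\int_\Omega|\mathrm{d}\phi|^2$ relative to the measure $\mu$ on $H^1(\Omega)$, and then read off \eqref{eq:var-char} from the usual min-max principle.

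\textbf{Step 1: reduce to connected domains.} Since $\Omega$ is admissible, $\overbar{\Omega}=\bigsqcup_i\overbar{\Omega}_i$. Hence $H^1(\Omega)=\bigoplus_i H^1(\Omega_i)$ and $\mu=\sum_i\mu|_{\overbar\Omega_i}$. Because $\Omega$ is bounded and the embedding $H^1\to L^2$ is compact, there are only finitely many components carrying nonzero $\mu$-mass that matter. More precisely, $\lambda_k\neq0$ forces at most $k$ components with $\mu\neq0$; each such component contributes a zero eigenvalue through its constants.

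Components with $\mu|_{\overbar\Omega_i}=0$ deserve care. On them the Rayleigh quotient is $+\infty$ unless $\phi$ is locally constant there. Such components can therefore be handled separately: functions supported on them are $\mu$-null and do not affect the quotient. It is simplest to restrict attention to the components where $\mu\neq0$.

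\textbf{Step 2: build the eigenbasis.} On each component $\Omega_i$ with $\mu_i\neq0$ and $\lambda_k\neq 0$, the preceding corollary shows that $\norm{\cdot}_*$ is an equivalent norm on $H^1(\Omega_i)$.

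Next I will show that the embedding $(H^1,\norm{\cdot}_*)\to L^2(\mu)$ is compact. The idea comes from Lemma~\ref{lem:unstable-ring} together with the partition-of-unity estimate: locally, $\int\phi^2\,\mathrm{d}\mu\le\lambda^{-1}\int|\mathrm{d}\phi|^2$ on small neighbourhoods. Shrinking these neighbourhoods, Lemma~\ref{lem:unstable-ring} applied with larger $k$ (finite eigenvalues $\lambda_{k'}$ grow) would give $\int\phi^2\,\mathrm{d}\mu\le\epsilon\int|\mathrm{d}\phi|^2+C_\epsilon\int\phi^2\,\mathrm{d}x$. Combined with the compactness of $H^1\to L^2(\Omega)$, this yields compactness into $L^2(\mu)$.

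That argument might fail, since $\lambda_{k'}$ could remain bounded. In that case I will not need compactness. Instead, I will use the spectral theorem for the self-adjoint operator $A$ associated with $Q$ on the closure $\mathcal H$ of $H^1$ in $L^2(\mu)$, the form being closable thanks to the $\norm{\cdot}_*$ equivalence. Min-max then says that $\lambda_k$ is either the $(k+1)$-th eigenvalue below the essential spectrum or the bottom of the essential spectrum. In either case, let $V$ be the range of the spectral projection $E(-\infty,\lambda_k)$. This space is finite-dimensional of dimension at most $k$, since otherwise min-max would give a $(k+1)$-dimensional subspace with quotient below $\lambda_k$. It contains $1$ because $\lambda_0=0<\lambda_k$. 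Density of $\D(\overbar\Omega)$ in $H^1$ ensures that the inf over $\D$ in \eqref{eq:eigen-var-char-metric} coincides with the min-max over the form domain.

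\textbf{Step 3: the characterization.} For $\phi\perp_\mu V$ in the form domain, the spectral measure of $\phi$ lives on $[\lambda_k,\infty)$, so $Q(\phi)\ge\lambda_k\norm{\phi}^2_\mu$.

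Conversely, the infimum equals $\lambda_k$. If the infimum were larger, then $V\oplus\R\phi$ for suitable $\phi$ would give $(k+1)$-dimensional spaces only with quotients strictly bounded away from $\lambda_k$. On the other hand, $E[\lambda_k,\lambda_k+\epsilon)$ is nonzero by min-max; taking $\phi$ in its range gives quotient less than $\lambda_k+\epsilon$. Since $V=\bigoplus_{\lambda<\lambda_k}V_\lambda$ by construction, this proves \eqref{eq:var-char}.

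\textbf{Main obstacle.} The delicate point is the identification of the closure of $H^1$ in $L^2(\mu)$ and the handling of $\mu$-null functions, where $\phi\neq0$ in $H^1$ but $\phi=0$ $\mu$-a.e. For such $\phi$ the quotient must be read as $+\infty$ unless $Q(\phi)=0$. By the equivalence of norms, $Q(\phi)=0$ together with $\phi=0$ in $L^2(\mu)$ forces $\phi=0$ on each charged component. This is where Step 1 and the corollary on $\norm{\cdot}_*$ are essential.
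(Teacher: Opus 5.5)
Your reduction to connected components carrying $\mu$-mass matches the paper, but the spectral construction in Step~2 has a genuine gap. There is no operator $A$ ``associated with $Q$'' on the closure $\mathcal H$ of $H^1$ in $L^2(\overbar{\Omega},\mu)$, because the map $\operatorname{tr}\colon H^1(\Omega)\to L^2(\overbar{\Omega},\mu)$ is typically far from injective, and $Q$ does not vanish on its kernel.

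In the Steklov case, where $\mu$ is surface measure on $\partial\Omega$, every $\phi\in C^\infty_c(\Omega)\setminus\{0\}$ has $\operatorname{tr}\phi=0$ but $Q(\phi)>0$. So $Q$ does not descend to a function on $\operatorname{tr}(H^1)\subset\mathcal H$ at all. Equivalently, the constant sequence $\phi_n=\phi$ has $\operatorname{tr}\phi_n=0$ and $Q(\phi_n-\phi_m)=0$, yet $Q(\phi_n)\not\to 0$, which violates closability. The equivalence of $\norm{\cdot}_*$ with the $H^1$ norm gives completeness of $H^1$, not closability of a form on $\mathcal H$. Your ``main obstacle'' paragraph only excludes the case $Q(\phi)=0=\norm{\phi}_{L^2(\mu)}$, which is not where the difficulty lies. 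Your abandoned compactness route does indeed fail: by Lemma~\ref{lem:ball-index}, $\mu=|x|^{-2}$ on the ball has nonempty essential spectrum. So the fallback is not optional, and it is exactly the step that is broken.

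The missing idea is to quotient out $\ker\operatorname{tr}$, which is closed because $\mu$ is continuous on $H^1$. Define $\tilde Q(f)=\inf\{Q(\phi):\phi\in H^1(\Omega),\ \operatorname{tr}\phi=f\}$ on $\operatorname{tr}(H^1)$. Then $\tilde Q(f)+\norm{f}^2_{L^2(\mu)}$ is the squared quotient norm of $(H^1,\norm{\cdot}_*)/\ker\operatorname{tr}$, so $\tilde Q$ is a closed form. The infimum is attained at the $\norm{\cdot}_*$-orthogonal projection onto $(\ker\operatorname{tr})^{\perp_*}$, the ``$\mu$-harmonic'' representatives. With this, your Steps~2--3 go through once you do two things:
\begin{itemize}
\item check that min-max for $\tilde Q$ reproduces $\lambda^N_k(\Omega,\mu)$ (subspaces on which $\operatorname{tr}$ is not injective have supremum $+\infty$, and $\mu$-harmonic subspaces can be approximated from $\D(\overbar{\Omega})$);
\item lift the near-minimizers from $E[\lambda_k,\lambda_k+\epsilon)$ back to $H^1$ through these representatives.
\end{itemize}

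The paper avoids all of this by never leaving $H^1$. It uses the bounded self-adjoint operator $T_\mu$ on $(H^1,\brt{\cdot,\cdot}_*)$ defined by $\brt{T_\mu\phi,\psi}_*=\int_{\overbar{\Omega}}\phi\psi\,\mathrm{d}\mu$. Its Rayleigh quotient is $1/\bigl(1+\int_\Omega|\mathrm{d}\phi|^2/\int_{\overbar{\Omega}}\phi^2\,\mathrm{d}\mu\bigr)$. Then $\ker\operatorname{tr}=\ker T_\mu$ sits harmlessly at the bottom of the spectrum, and the $\lambda_i$ appear as the top values $1/(1+\lambda_i)$. The characterization \eqref{eq:var-char} then follows from the sup--inf principle for bounded self-adjoint operators, using that $\perp_*$ and $\perp_\mu$ coincide against eigenvectors of $T_\mu$ with nonzero eigenvalue.
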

\begin{proof}
    Recall that $\overbar{\Omega} = \bigsqcup_{i} \overbar{\Omega}_i$, where $\Omega_i$ are the connected components of $\Omega$.
    Then $\bigcup_k\brc{\lambda^N_k(\Omega, \mu)} = \bigcup_{k,i}\brc{\lambda^N_k(\Omega_i, \mu)}$, and it suffices to prove~\eqref{eq:var-char} for each connected component $\Omega_i$ such that $\mu(\overbar{\Omega}_i) \neq 0$. Hence, we assume that $\Omega$ is connected.

    By the previous corollary, we have $\mu \in \mathfrak{Bil}[H^1(\Omega)]$ and an equivalent norm
    \begin{equation}
        \norm{\phi}^2_* = \int_{\Omega} \abs{\mathrm{d}\phi}^2 + \int_{\overbar{\Omega}} \phi^2 \mathrm{d}\mu \quad \text{on } H^1(\Omega).
    \end{equation}

    Then $\brc{\frac{1}{1 + \lambda^N_i(\Omega, \mu)}}$ form the top part of the spectrum of the operator
    \begin{equation}\label{eq:mu-operator}
        T_\mu := (\mu+\Delta)^{-1}\mu
    \end{equation}
    on $H^1$ induced by $\mu \in \mathfrak{Bil}[H^1]$ with respect to the inner product associated with $\norm{\cdot}_*^2$:
    \begin{equation}
        \frac{1}{1 + \lambda^N_{k-1}(\Omega, \mu)}
    = \sup_{V_{k} \subset H^1} \inf_{\phi \in V_{k}} \frac{\int \phi^2\mathrm{d}\mu}{\norm{\phi}^2_*}
    = \sup_{V_{k} \subset H^1} \inf_{\phi \in V_{k}} \frac{\brt{T_\mu\phi,\phi}_*}{\norm{\phi}^2_*}.
    \end{equation}
    Thus, the variational characterization~\eqref{eq:var-char} follows from the analogous characterization for bounded self-adjoint operators on a Hilbert space.
\end{proof}

\section{Proofs of the main results}\label{sec:proofs}
\subsection{The ball maximizes \texorpdfstring{$\overbar{\lambda}_1^N(\Omega, \mu)$}{Lg}}
\begin{lemma}\label{lem:ball-index}
    Let $d \geq 3$. One has
    $\lambda_1^N(\mathbb{B}^d,\tfrac{1}{\abs{x}^2}) = \min \brc{d-1, (\frac{d-2}{2})^2}$.
    That is, $\lambda_1^N(\mathbb{B}^d,\tfrac{1}{\abs{x}^2}) = d-1$ when $d \geq 7$,
    and $\lambda_1^N(\mathbb{B}^d,\tfrac{1}{\abs{x}^2}) < d-1$ otherwise. Moreover, the value $(\frac{d-2}{2})^2$ is the bottom of the essential spectrum.
\end{lemma}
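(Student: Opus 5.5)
We separate variables in polar coordinates $x = r\theta$, $r\in(0,1]$, $\theta\in\Sph^{d-1}$. The weighted Neumann problem $\Delta\phi = \lambda\phi/|x|^2$ on $\mathbb{B}^d$ comes from the quotient
\begin{equation}
  \frac{\int_{\mathbb{B}^d}\abs{\mathrm{d}\phi}^2}{\int_{\mathbb{B}^d}\phi^2\abs{x}^{-2}},
\end{equation}
and expanding $\phi(r\theta)=\sum_{\ell,m} f_{\ell m}(r)Y_{\ell m}(\theta)$ in spherical harmonics (eigenvalues $\ell(\ell+d-2)$) decouples it. For the $\ell$-th mode the quotient becomes
\begin{equation}
  Q_\ell(f)=\frac{\int_0^1 \bigl(f'^2 + \ell(\ell+d-2) f^2 r^{-2}\bigr) r^{d-1}\,\mathrm{d}r}{\int_0^1 f^2 r^{d-3}\,\mathrm{d}r}.
\end{equation}
The substitution $r=e^{-t}$, $t\in[0,\infty)$, followed by $f = e^{(d-2)t/2}g$, turns the weight $r^{d-3}\mathrm{d}r$ into $\mathrm{d}t$. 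The Dirichlet part becomes $\int_0^\infty (g'^2 + ((\tfrac{d-2}{2})^2+\ell(\ell+d-2))g^2)\,\mathrm{d}t$ plus a boundary term at $t=0$ coming from integrating $g g'$. The problem is therefore unitarily equivalent to a direct sum of half-line Schrödinger operators $-\partial_t^2 + c_\ell$ with $c_\ell=(\tfrac{d-2}{2})^2+\ell(\ell+d-2)=(\ell+\tfrac{d-2}{2})^2$, each with a Robin condition at $t=0$ induced by the Neumann condition at $r=1$. The Neumann condition $f'(1)=0$ becomes $g'(0)=\tfrac{d-2}{2}g(0)$.

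Each such operator has essential spectrum $[c_\ell,\infty)$, so the bottom of the essential spectrum of the full problem is $c_0=(\tfrac{d-2}{2})^2$. A Weyl sequence for this value can be built from functions $g$ spread out at large $t$, i.e.\ concentrated near the origin in $r$. Discrete spectrum below $c_\ell$ appears only when the Robin parameter allows a decaying eigenfunction $g=e^{-\kappa t}$ with $g'(0)/g(0)=-\kappa<0$. In our case the ratio is $+\tfrac{d-2}{2}>0$, so there are no eigenvalues below $c_\ell$. The only discrete values could come from explicit solutions, and we check these directly.

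The candidates are the mode $\ell=0$ and the constant function (eigenvalue $0$), and $\ell=1$ with $f(r)=r$, the equator-map coordinates. The latter gives eigenvalue $d-1$: $\Delta(x_i)=0$ does not hold in the weighted sense, but $x_i/|x|$ satisfies $-\Delta(x_i/|x|) = (d-1)x_i/|x|^3$ and has zero normal derivative at $r=1$. Concretely we verify $Q_1(1)=d-1$ with $f\equiv1$, using $\int_0^1 (d-1)r^{d-3}\,\mathrm{d}r/\int_0^1 r^{d-3}\,\mathrm{d}r = d-1$. Since $f\equiv1$ is orthogonal to constants via $Y_{1m}$, this shows $\lambda_1\le d-1$ always. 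In the variables above, $f=1$ corresponds to $g=e^{-(d-2)t/2}$, an $L^2$ eigenfunction decaying with $\kappa = \tfrac{d-2}{2}$, so $d-1$ is a genuine eigenvalue. For $\ell=0$, the constant is the zeroth eigenfunction, and the Robin analysis shows nothing else lies below $c_0$. For $\ell\ge1$ the admissible values are $\ge d-1$ in general: $Q_\ell(f)\ge \ell(\ell+d-2)$ by the pointwise bound, and $\ell(\ell+d-2)\geq d-1$.

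Combining these, $\lambda_1 = \inf\bigl(\{d-1\}\cup[c_0,\infty)\bigr)$, and the spectrum in the $\ell=0$ sector above $0$ starts at $c_0$. Hence $\lambda_1=\min\{d-1,(\tfrac{d-2}{2})^2\}$. Finally $(\tfrac{d-2}{2})^2\ge d-1$ if and only if $d^2-8d+8\ge0$, which for integers $d\ge3$ means $d\ge7$.

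The main obstacle is the $\ell=0$ sector. One must rigorously show that, after orthogonalizing against constants in the weighted $L^2$, nothing lies below $c_0$. The intended route is the quadratic form identity
\begin{equation}
  \int_0^1 f'^2 r^{d-1}\,\mathrm{d}r=\int_0^\infty\Bigl(g'^2+\bigl(\tfrac{d-2}{2}\bigr)^2g^2\Bigr)\mathrm{d}t-\tfrac{d-2}{2}g(0)^2 .
\end{equation}
Because of the negative boundary term, the Hardy-type bound is not immediate. We would therefore identify the spectrum through the ODE instead: solutions are $r^{-(d-2)/2\pm i\beta}$, and for $\lambda<c_0$ the non-$L^2$ branch is excluded, leaving only the constants. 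The justification of the form domain ($H^1\cap C^0$ dense, finiteness of $\int\phi^2/|x|^2$ via Hardy for $d\ge3$) requires care.
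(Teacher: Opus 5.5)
Your overall route (separation into spherical-harmonic sectors, explicit analysis of the radial problem, then comparing $d-1$ with $(\frac{d-2}{2})^2$) is the paper's. However, the step that carries the content of the lemma, the $\ell=0$ sector, rests on a sign error.

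Since $t=0$ corresponds to $r=1$ and $\partial_t f = e^{(d-2)t/2}\bigl(g'+\frac{d-2}{2}g\bigr)$, the condition $f'(1)=0$ reads $g'(0)=-\frac{d-2}{2}\,g(0)$, not $+\frac{d-2}{2}\,g(0)$. This is also the natural boundary condition of your own form $\int_0^\infty (g'^2+c_\ell g^2)\,\mathrm{d}t-\frac{d-2}{2}g(0)^2$. The proposal is inconsistent on this point: you later use $g=e^{-(d-2)t/2}$ (that is, $f\equiv 1$) as an eigenfunction, and it has $g'(0)/g(0)=-\frac{d-2}{2}$. Consequently the claim ``there are no eigenvalues below $c_\ell$'' is false. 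It would exclude both the eigenvalue $0$ of the constants and the eigenvalue $d-1$ you exhibit, so ``the Robin analysis shows nothing else lies below $c_0$'' has no valid justification as written. A smaller slip: for $\lambda<c_0$ the radial solutions are the real powers $r^{\beta_\pm}$ with $\beta_\pm=-\frac{d-2}{2}\pm\sqrt{(\frac{d-2}{2})^2-\lambda}$. The oscillatory ones $r^{-(d-2)/2\pm i\beta}$ occur only for $\lambda>c_0$.

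With the correct sign the argument closes and your ``main obstacle'' disappears. An eigenfunction of $-\partial_t^2+c_\ell$ with eigenvalue $\lambda<c_\ell$ must be $e^{-\kappa t}$ with $\kappa=\sqrt{c_\ell-\lambda}>0$, and $g'(0)=-\frac{d-2}{2}g(0)$ forces $\kappa=\frac{d-2}{2}$. So every sector has exactly one eigenvalue below $c_\ell$, namely $c_\ell-(\frac{d-2}{2})^2=\ell(\ell+d-2)$ with $f\equiv1$. The spectrum is therefore $\{\ell(\ell+d-2)\}_{\ell\ge0}\cup[(\frac{d-2}{2})^2,\infty)$.

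Orthogonality to constants in $L^2(r^{d-3}\mathrm{d}r)$ is exactly $L^2(\mathrm{d}t)$-orthogonality to $e^{-(d-2)t/2}$. Hence you get the sharp bound $\int_0^1 f'^2r^{d-1}\,\mathrm{d}r\ge(\frac{d-2}{2})^2\int_0^1 f^2r^{d-3}\,\mathrm{d}r$ whenever $\int_0^1 f r^{d-3}\,\mathrm{d}r=0$, and the negative boundary term causes no trouble.

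This corrected analysis is equivalent to the paper's. The paper notes that the weight $|x|^{-2}$ makes the spectrum $\bigsqcup_i(\sigma(L)+\nu_i)$ for a single radial operator $L$. It discards $r^{\beta_-}$ by integrability and gets $\beta_+=0$ from $\phi'(1)=0$, so $\sigma(L)=\{0\}\sqcup\sigma_{ess}(L)$. It then cites an oscillation criterion (Weidmann) for $\inf\sigma_{ess}(L)=(\frac{d-2}{2})^2$. Your Liouville substitution replaces that citation with the elementary fact that a half-line operator $-\partial_t^2+c$ with a Robin condition has essential spectrum $[c,\infty)$, which is a genuine simplification once the sign is fixed. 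The form-domain issue you flag is handled in the paper by Hardy's inequality alone, which makes $\int\phi^2|x|^{-2}$ continuous on $H^1(\mathbb{B}^d)$ for $d\ge3$.
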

\begin{proof}
    By the Hardy inequality, the quadratic form $\mathfrak{q}[\varphi] := \int_{\mathbb{B}^d}  \frac{\varphi^2}{\abs{x}^2}$ is continuous as a form on $H^1(\mathbb{B}^d)$ when $d \geq 3$.
    The decomposition by normalized spherical harmonics yields
    \begin{equation}
        H^1(\mathbb{B}^d) = H^1(\mathbb{B}^d\setminus\brc{0})
        = \bigoplus_i H^1((0,1], r^{d-1}\mathrm{d}r) Y_i,
    \end{equation}
    where $Y_i \in C^\infty(\Sph^{d-1})$, $\Delta_{\Sph^{d-1}} Y_i = \nu_i Y_i$, and $\nu_i \in \set{\ell(d-2+\ell)}{\ell = 0, 1,2,\cdots}$ counting multiplicities.
    On $\psi(x) = \phi(r)Y_i$ with $\phi(r) \in H^1((0,1], r^{d-1}\mathrm{d}r)$, we thus have
    \begin{equation}
        \frac{\int_{\mathbb{B}^d} \abs{\mathrm{d}\psi}^2}{\int_{\mathbb{B}^d} \psi^2\frac{\mathrm{d}x}{|x|^2} }
        = \frac{\int_{0}^1 \phi'(r)^2r^{d-1}\mathrm{d}r}{\int_{0}^1 \phi(r)^2 r^{d-3}\mathrm{d}r}+\nu_i.
    \end{equation}
    Therefore, if we define an operator $L$ by $L\varphi := -r^{3-d}(r^{d-1}\varphi(r)')'$ defined on the domain
    \begin{equation}
        \set*{\phi \in C^\infty_0((0,1])}{\phi'(1) = 0},
    \end{equation}
    then the eigenvalues $\lambda_k^N(\mathbb{B}^d,\tfrac{1}{\abs{x}^2})$ are precisely the lowest eigenvalues in the union of the spectra
    \begin{equation}
        \bigsqcup_i \brc{\sigma(L) + \nu_i}.
    \end{equation}

    The general solution of $L\varphi =\lambda \varphi$ has the form
    $
        \varphi(r) = c_1 r^{\beta_+}+ c_2r^{\beta_{-}},
    $
    where
    \begin{equation}\label{eq:eigen-func-roots}
        \beta_{\pm} = -\frac{d-2}{2} \pm \sqrt{\brr{\frac{d-2}{2}}^2 - \lambda},
    \end{equation}
    and there is one extra solution $\varphi(r) = c_3 r^{-\frac{d-2}{2}}\ln r$ when $\lambda = \brr{\frac{d-2}{2}}^2$.
    Since eigenfunctions belong to $L^2((0,1), r^{d-3}\mathrm{d}r)$, or equivalently, to $H^1((0,1], r^{d-1}\mathrm{d}r)$, the choices of
    $\phi$ reduce to $\varphi(r) = c_1 r^{\beta_+}$. Finally, the condition $\varphi'(1) = 0$ yields $\beta_+ = 0$, and hence $\varphi$ is constant. Thus,
    \begin{equation}
        \sigma(L) = \brc{0} \sqcup \sigma_{ess}(L).
    \end{equation}
    The bottom of the essential part can be computed as follows (see \cite[Theorem~14.9c]{Weidmann:1987:ode-spectral-theory}):
    \begin{equation}
        \inf \sigma_{ess}(L) = \sup \set*{\lambda \in \R}{\exists \varphi \colon (L-\lambda)\varphi = 0 \text{ and } \phi^{-1}(0) \cap (0,1) \text{ is finite} }.
    \end{equation}
    Hence, $\inf \sigma_{ess} = (\tfrac{d-2}{2})^2$ by~\eqref{eq:eigen-func-roots}; cf. also~\cite[Lemma~1.3]{Schoen-Uhlenbeck:1984:min-harm-maps}.
\end{proof}

\subsubsection{Proof of Theorem~\ref{thm:max-lambda-11}}
    One may assume that $\Omega \subset B$, where $B= \mathbb{B}_R^d(0)$ for some $R$. We define a map $\Phi \colon \overbar{B} \to \R^d$ by
    \begin{equation}
        \Phi(c) = \dashint_{\overbar{\Omega}} \frac{c-x}{\abs{c-x}} \mathrm{d}\mu(x).
    \end{equation}
    The map $\Phi$ is easily seen to be continuous -- either by the dominated convergence theorem or by the fact that $\mu \in \mathfrak{Bil}[H^1]$ if $\lambda_1^N(\Omega,\mu) \neq 0$.
    When $c \in \partial B$ and $x \in \Omega$, we see that $\brt{c-x, c} > 0$ and hence $\brt{\Phi(c), c} > 0$, which implies that $\Phi \colon{\partial B \to \R^{d}\setminus \brc{0}}$ is homotopic to the identity map. In particular, $\deg \Phi|_{\partial B \to \R^{d}\setminus \brc{0}} \neq 0$, and there exists $c \in B$ such that $\Phi(c) = 0$.
    Otherwise, $\Phi$ would be homotopic to a constant map with $\deg \Phi = 0$.
    Therefore, we may assume that $\Omega$ is centered in such a way that
    \begin{equation}
        \int_{\overbar{\Omega}} \frac{x}{\abs{x}} \mathrm{d}\mu = 0.
    \end{equation}
    That is, all the coordinate functions of ${x}/{\abs{x}}$ are orthogonal to constants. Recall that
    $|\Omega| = |\mathbb{B}^d|$. By Corollary~\ref{cor:k-th-eigenval-as-inf}, we obtain
    \begin{equation}
      \lambda^N_1(\Omega, \mu) \int_{\Omega}\frac{x_i^2}{\abs{x}^2}  \leq \int_{\Omega}\abs{\mathrm{d}\brr{\frac{x_i}{\abs{x}}}}^2.
    \end{equation}
    Then summing over $i$ yields
    \begin{equation}\label{ineq:coord-var-char}
      \begin{aligned}
        \overbar{\lambda}^N_1(\Omega, \mu) &\leq \int_{\Omega}\abs{\mathrm{d}\brr{\frac{x}{\abs{x}}}}^2 = \int_{\Omega}\frac{d-1}{\abs{x}^2} &
        \\ &=
        \int_{\Omega \cap \mathbb{B}^d}\frac{d-1}{\abs{x}^2}
        +  \int_{\Omega \setminus \mathbb{B}^d}\frac{d-1}{\abs{x}^2}
        \\ &\leq  \int_{\mathbb{B}^d}\frac{d-1}{\abs{x}^2},
      \end{aligned}
    \end{equation}
    since $\frac{1}{\abs{x}^2}|_{\Omega \setminus \mathbb{B}^d} \leq \frac{1}{\abs{x}^2}|_{\mathbb{B}^d\setminus \Omega}$.

      The equality occurs only if  $|\Omega \setminus \mathbb{B}^d| = |\mathbb{B}^d\setminus \Omega| = 0$ and the coordinate functions of
      $u_0\colon x \mapsto {x}/{|x|}$ are the eigenfunctions, that is $\Delta u^i_0 = \lambda_1 u^i_0\mu$, in which case $\mu$ is proportional to
      $\abs{\mathrm{d}u_0}^2$ since $\abs{u_0}^2 = 1$.

\subsection{The two balls maximize \texorpdfstring{$\overbar{\lambda}_2^N(\Omega, \mu)$}{Lg}}

Let $p \in \R^d\setminus\brc{0}$ and $R_p$ be reflection
\begin{equation}
    R_p(y) = y - 2 \brt{y,\frac{p}{\abs{p}}}\frac{p}{\abs{p}}.
\end{equation}
Analogously to \cite{Freitas-Laugesen:2022:two-balls-neumann-max}, we
    define $H_{p,t} := \set{y \in \R^d}{ \brt{y,p} < t\abs{p}}$, where $p \neq 0$ and $t \geq 0$. Let $R_{p,t}(y) = y + 2\brr{t - \brt{y,\frac{p}{\abs{p}}}}\frac{p}{\abs{p}}$ be the reflection in the hyperplane $\partial H_{p,t}$. The ``fold map''
    is defined as
    \begin{equation}
        F_{p,t} := \begin{cases}
        \id & \quad\text{on}\quad H_{p,t}
        \\ R_{p,t} &\quad\text{on}\quad \R^d\setminus H_{p,t}.
        \end{cases}
    \end{equation}

\subsubsection{Proof of Theorem~\ref{thm:max-lambda-22}}
    Again, as in the proof of Theorem~\ref{thm:max-lambda-11}, one may assume that $\Omega \subset B$ for some ball $B = \mathbb{B}_R^d(0)$.
    Let $V = \mathrm{span} \brc{1, \phi_1}$ from Corollary~\ref{cor:k-th-eigenval-as-inf} ($\phi_1$ may be $0$) and
    consider a continuous map $\Phi\colon \overbar{B}\times \overbar{B}
    \to \R^d \times \R^d$ given by
    \begin{equation}\label{eq:def-Phi}
        \Phi(c, p) = \brr{\dashint_{\overbar{\Omega}} \frac{c-F_{p,R-\abs{p}}(x)}{\abs{c-F_{p,R-\abs{p}}(x)}} \mathrm{d}\mu(x), \dashint_{\overbar{\Omega}} \frac{c-F_{p,R-\abs{p}}(x)}{\abs{c-F_{p,R-\abs{p}}(x)}} \phi_1(x) \mathrm{d}\mu(x)}.
    \end{equation}
    Note that $\Phi(c, 0)$ is well defined since
    $\Omega \subset H_{p,R}$, and $F_{p,R}|_{\overbar{\Omega}} = \id$ does not depend on $p$. The map $\Phi = (\Phi',\Phi'')$ has the following two properties:
    \begin{itemize}
        \item $\brt{\Phi'(c,p), c} > 0$ when $c \in \R^d\setminus B$, since $F_{p,t}(B) \subset B$ and $\brt{c-F_{p,t}(x), c} > 0$;
        \item $\Phi(R_p (c),-p) = (R_p\times R_p)(\Phi(c,p))$ when $p \in \partial B$ since $F_{-p,0} = R_p \circ F_{p,0}$.
    \end{itemize}

    We aim to find a pair $(c, p) \in \overbar{B}\times \overbar{B}$ with $\Phi(c, p) = 0$.
  Suppose that no such pair exists. Then $\Phi(\overbar{B}\times \overbar{B}) \subset \R^{2d}\setminus \brc{0}$, and $\Phi$ is homotopic to a constant map. We will also prove $\deg \Phi|_{\partial (B\times B) \to \R^{2d}\setminus \brc{0}} \neq 0$. Set
    \begin{equation}
        \Phi_t(c,p) := \Phi\brr{\frac{c}{1- t\tfrac{\abs{c}}{R}},p}.
    \end{equation}
    Using the first property above and the fact that $\Phi(\overbar{B}\times \overbar{B}) \subset \R^{2d}\setminus \brc{0}$, we see that this is a homotopy in $\R^{2d}\setminus\brc{0}$ between $\Phi = \Phi_0$ and $\Phi_1$,
    where the latter has almost the same formula as $\Phi$ with the only difference that all the $F_{p,R-|p|}$ are multiplied by $(1- \abs{c}/{R})$.
    It is easy to see that $\Phi_1$ satisfies the second property (even for $(c,p) \in \partial B \times B$, as $\Phi_1(c,p) = (c/R,0)$ in this case) and therefore has
    a nonzero degree by \cite[Lemma~4.2]{Karpukhin-Stern:2024:new-character-of-conf-eigenval}, with the natural identification $\Sph^{2d-1} \approx \partial (B\times B)$, $(a,b) \mapsto \frac{R\cdot(a,b)}{\max\brc{|a|,|b|}}$. Thus, we obtain a contradiction.

    Therefore, we can choose coordinates so that $c=0$ and choose $\mathcal{R} = R_{p, R-\abs{p}}$, $H = H_{p, R-\abs{p}}$,
    and $F = F_{p, R-\abs{p}}$ so that $\tfrac{F}{\abs{F}} \bot_{\mu} V$. By Corollary~\ref{cor:k-th-eigenval-as-inf}, similarly to~\eqref{ineq:coord-var-char}, we have
    \begin{align}
        \overbar{\lambda}^N_2(\Omega, \mu)
        &\leq \int_{\Omega}\abs{\mathrm{d}\brr{\frac{F}{\abs{F}}}}^2
        =\int_{\Omega\cap H}\frac{d-1}{\abs{x}^2}
        + \int_{\mathcal{R}(\Omega\setminus H)}\frac{d-1}{\abs{x}^2} \\
        & \leq 2\int_{\mathbb{B}}\frac{d-1}{\abs{x}^2},
    \end{align}
    where the last inequality follows from \cite[Lemma~4.1]{Freitas-Laugesen:2022:two-balls-neumann-max}, together with its sharpness conditions.

\subsection{Upper bounds for planar domains}
  For an admissible domain with a density $\rho \in L^1(\partial \Omega)$ with respect to the $(d-1)$-dimensional Hausdorff measure, we define
  weighted Steklov eigenvalues $\sigma_k(\Omega,\rho)$ by $\sigma_k(\Omega,\rho):= \lambda_k^N(\Omega,\rho\mathrm{d}\mathcal{H}^{d-1}|_{\partial \Omega})$.

  It is known (see, for example, \cite{Petrides:2019:max-steklov-eigenval-on-surfaces, Vinokurov:2025:sym-eigen-val-lms}) that
  the suprema of the Steklov eigenvalues within conformal classes on Riemannian surfaces are achieved (modulo bubbling) on free boundary harmonic maps into balls.
  The purpose of the following lemma is to show that all absolutely continuous maximizers are obtained in such a way.
  \begin{lemma}\label{lem:qualitative-steklov-stability}
    Let $\Omega$ be a smooth connected Riemannian surface with nonempty boundary, and let $\rho \in L^1_+(\partial \Omega)$ be such that
    \begin{equation}
      \overbar{\sigma}_k(\Omega,\rho) = \sup_{\tilde{\rho} \in L^1_+(\partial \Omega)}\overbar{\sigma}_k(\Omega,\tilde{\rho}) =: \mathfrak{S}_k(\Omega).
    \end{equation}
    Then there exists a free boundary harmonic map $u \in C^\infty(\overbar{\Omega}, \overbar{\mathbb{B}}^n)$ given by the
    $\sigma_k$-eigenfunctions. In particular, $\sigma_k\rho = \abs{\partial_\nu u} \in C^\infty(\partial\Omega)$.
  \end{lemma}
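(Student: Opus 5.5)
The plan is to extract, from the maximality of $\rho$, a first-order optimality condition that produces the eigenfunctions forming $u$, and then to bootstrap their regularity. Normalize so that $\sigma_k(\Omega,\rho)=1$ and $\int_{\partial\Omega}\rho=\mathfrak{S}_k(\Omega)$.

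\emph{Step 1: optimality condition.} Let $E_k\subset H^1(\Omega)$ be the $\sigma_k$-eigenspace. It is finite dimensional because the trace $H^1(\Omega)\to L^2(\partial\Omega,\rho)$ is compact: for an $L^1$ density on the boundary of a surface, the trace lies in every $L^p$, but I would rather use that $\mu=\rho\,\mathrm{d}\mathcal{H}^1$ is a continuous form on $H^1$ and that the spectrum below $\sigma_k$ is discrete, by Corollary~\ref{cor:k-th-eigenval-as-inf}. Perturb $\rho_t=\rho+t(f-\rho)$ with $f\in L^\infty_+(\partial\Omega)$. The standard one-sided derivative argument for eigenvalues of a family of quadratic forms (as in the first variation formula of \cite{Vinokurov:2025:higher-dim-harm-eigenval}) gives a bound by extremes of $-\int f\phi^2+\dots$ over the eigenspace. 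Maximality then forces, via a Hahn--Banach separation over the convex set $\{\phi^2 : \phi\in E_k,\ \int\phi^2\rho=1\}$ against the cone of test densities, the existence of $u_1,\dots,u_n\in E_k$ with
\[
  \sum_i u_i^2=1 \quad \rho\text{-a.e. on } \partial\Omega \text{ and } \mathcal{H}^1\text{-a.e. on } \partial\Omega .
\]
The second almost-everywhere condition, including on $\{\rho=0\}$, comes from testing with $f$ supported where $\rho$ vanishes. This also gives $\rho>0$ a.e. and hence, in particular, $\rho\neq 0$.

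\emph{Step 2: the harmonic map.} Each $u_i$ is harmonic in $\Omega$ and weakly satisfies $\partial_\nu u_i=\rho u_i$. Set $u=(u_1,\dots,u_n)$. By Step~1, $u|_{\partial\Omega}$ maps into $\Sph^{n-1}$, and $|u|^2$ is subharmonic with boundary value $1$, so $u(\Omega)\subset\mathbb{B}^n$. Moreover $\partial_\nu u=\rho u$ is parallel to $u$ on the boundary. This is the weak free boundary condition, so $u\in H^1(\Omega,\overbar{\mathbb{B}}^n)$ is a weakly free boundary harmonic map.

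\emph{Step 3: regularity (main obstacle).} The difficulty is that $\rho$ is only $L^1$, so the Neumann data are not a priori better than $L^1$. I would argue as follows. In isothermal half-disk coordinates, $u$ is harmonic and $\partial_\nu u=|\partial_\nu u|u$ with $|u|=1$ on the boundary. Equivalently, $\partial_\nu u=\big(\sum_i u_i\partial_\nu u_i\big)u$, which has the structure of Scheven's / Laurain--Petrides' regularity theory for weakly free boundary harmonic maps from surfaces. By conformal invariance in dimension $2$, the boundary energy is critical, and the result is that such $H^1$ maps are smooth up to the boundary. Concretely, I would reflect across the boundary in a way adapted to the sphere to obtain a weakly harmonic-type system with antisymmetric potential, and then invoke Rivière's theorem to gain continuity and then smoothness. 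An alternative route is to show that the Hopf differential $\phi=(|u_x|^2-|u_y|^2-2i\langle u_x,u_y\rangle)\,dz^2$ is holomorphic and real on the boundary, which yields control of $|\partial_\nu u|$ in $L^2$ and starts the bootstrap. Once $u\in C^\infty(\overbar{\Omega})$, we obtain $\sigma_k\rho=\rho=\langle\partial_\nu u,u\rangle=|\partial_\nu u|$, which is smooth, as claimed.
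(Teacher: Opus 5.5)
Your Step 1 is where the argument breaks. The first-variation and Hahn--Banach argument needs $\sigma_k(\Omega,\rho)$ to be an isolated eigenvalue of finite multiplicity. Only then is $E_k\neq\{0\}$, is the one-sided derivative of $\overbar{\sigma}_k$ along $\rho+t(f-\rho)$ an extreme value of $\int f\phi^2$ over a finite-dimensional $E_k$, and is $\{\phi^2:\phi\in E_k,\ \int\phi^2\rho=1\}$ compact. For $\rho\in L^1$ none of this is available.

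Your first justification is wrong. Traces of $H^1$ lie in every $L^p(\partial\Omega)$ but not in $L^\infty$, so H\"older gives nothing against a merely integrable density, and $H^1(\Omega)\to L^2(\partial\Omega,\rho)$ need not be compact. Your fallback, Corollary~\ref{cor:k-th-eigenval-as-inf}, only gives discreteness strictly below $\sigma_k$. It says nothing about $\sigma_k$ itself, which may be the bottom of the essential spectrum and need not be attained, exactly as in Lemma~\ref{lem:ball-index} for $3\le d\le 6$.

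This genuinely happens for $L^1$ boundary densities. Take $\rho\approx c\,|\theta|^{-1}\log^{-2}(1/|\theta|)$ near a boundary point. It is integrable and bounded on $H^1$ by a logarithmic Hardy inequality. Yet radial functions of $\log(1/|\theta|)$ give infinitely many disjointly supported test functions with Rayleigh quotients bounded by $C/c$, so the spectrum is not discrete. Ruling this out at a maximizer would itself need an argument, akin to excluding bubbling, and your proposal gives none.

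The paper never differentiates at $\rho$. It first truncates to $\tilde\rho_\epsilon=\min\{\rho,\epsilon^{-1}\}\in L^\infty$, for which the spectrum is discrete and $\overbar{\sigma}_k(\Omega,\tilde\rho_\epsilon)\to\mathfrak{S}_k(\Omega)$. It then applies Ekeland's variational principle, adapting \cite[Proposition~4.3]{Vinokurov:2026:p-harm-and-conf-class-opt}. This yields $\rho_\epsilon\in L^\infty_+$ with $\norm{\rho_\epsilon-\tilde\rho_\epsilon}_{L^1}\to0$ that satisfy an approximate Euler--Lagrange condition, namely the properties of \cite[Proposition~3.1]{Vinokurov:2025:sym-eigen-val-lms}. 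Bounds on the eigenmaps obtained on $\supp\rho_\epsilon$ are propagated to $\overbar{\Omega}$ by the weak maximum principle $\Delta\abs{u}\le\lambda\abs{u}\mu$. The limit results \cite[Lemmas~3.6 and~3.8]{Vinokurov:2025:sym-eigen-val-lms} then show that the weak$^*$ limit of $\rho_\epsilon$ is induced by a smooth free boundary harmonic map, and $L^1$ convergence forces that limit to be $\rho$. Ekeland is exactly the device that replaces your exact first variation at a possibly non-discrete $\rho$ by approximate first variations at nearby bounded densities.

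Your Step 3 is not the weak point: once a weakly free boundary harmonic map exists, two-dimensional boundary regularity is known. Two smaller issues remain in Step 1. Testing with $f$ supported in $\{\rho=0\}$ only gives $\sum_i u_i^2\ge 1$ there; the reverse inequality requires the maximum principle applied to $(\abs{u}-1)_+$. The claim $\rho>0$ a.e. is unsupported, and it is not needed.
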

  \begin{proof}
    The proof of this lemma is a straightforward adaptation of~\cite[Theorem~1.1]{Vinokurov:2026:p-harm-and-conf-class-opt} to the Steklov case.

    Let us recall that in order to prove the existence of maximizing densities \cite[Theorem~1.5]{Vinokurov:2025:sym-eigen-val-lms},
    we constructed a maximizing sequence of densities $\rho_\epsilon \in L^\infty(\partial\Omega)$,
    $\overbar{\sigma}_k(\Omega,\rho_\epsilon) \to \mathfrak{S}_k(\Omega)$, by \cite[Proposition~3.1]{Vinokurov:2025:sym-eigen-val-lms}
    and then used \cite[Lemmas~3.6 and~3.8]{Vinokurov:2025:sym-eigen-val-lms} to prove that the weak$^*$ limit of a subsequence
    is a desired maximizer.

    Now, if $\overbar{\sigma}_k(\Omega,\rho) = \mathfrak{S}_k(\Omega)$, one can use Ekeland's variational principle to construct
    a maximizing sequence $\rho_\epsilon \in L^\infty_+(\partial\Omega)$ that satisfies the properties of \cite[Proposition~3.1]{Vinokurov:2025:sym-eigen-val-lms},
    and additionally, one has $\rho_\epsilon \to \rho$ in  $L^1(\partial\Omega)$. Then the weak$^*$ limit of any subsequence
    coincides with $\rho$ so that the application of
    \cite[Lemmas~3.6 and~3.8]{Vinokurov:2025:sym-eigen-val-lms} yields that $\rho$ is induced by a free boundary harmonic map.

    To construct such a sequence $\rho_\epsilon$, let us consider sliced functions $\tilde{\rho}_{\epsilon} := \min \brc{\rho, \epsilon^{-1}} \in L^\infty$.
    By \cite[(2.4) and above]{Vinokurov:2025:sym-eigen-val-lms}, we have $\overbar{\sigma}_k(\Omega, \tilde{\rho}_\epsilon) \to \mathfrak{S}_k(\Omega)$.
    Then one adapts the proof of \cite[Proposition~4.3]{Vinokurov:2026:p-harm-and-conf-class-opt} to produce a desired maximizing
    sequence $\rho_\epsilon$ with $\norm{\tilde{\rho}_\epsilon - \rho_\epsilon}_{L^1}\to 0$. The difference with \cite[Proposition~4.3]{Vinokurov:2026:p-harm-and-conf-class-opt} is that
    now, there is only one parameter and everything takes place on the boundary $\partial\Omega$ \cite[see also the proof of][Proposition~3.1]{Vinokurov:2025:sym-eigen-val-lms}.
    To extend the upper bounds obtained on $\supp \rho_\epsilon \subset \partial\Omega$ to the whole $\overbar{\Omega}$, we use the weak maximum principle: for any measure $\mu \in \mathcal{M}_+(\overbar{\Omega})\cap \mathfrak{Bil}[H^1(\Omega)]$,
    \begin{equation}
      \Delta u = \lambda u \mu \implies \Delta \abs{u} \leq \lambda \abs{u} \mu \ \text{ weakly},
    \end{equation}
    that is, $\int_{\Omega} \brt{\mathrm{d}\abs{u},\mathrm{d}\phi} \leq \lambda\int_{\overbar{\Omega}}\phi\abs{u}\mathrm{d}\mu \quad \forall \phi \in H^1_+(\Omega)$.
  \end{proof}

\subsubsection{Proof of Theorem~\ref{thm:simply-connected-steklov}}
  Observe that $\Omega$ is a simply connected topological manifold with boundary, so it has exactly one boundary component, which is
  a rectifiable simple closed curve. Then the Riesz--Privalov theorem (see \cite[Theorem 6.8]{Pommerenke:1992:conf-maps}) tells us
  that there exists a conformal homeomorphism
  \begin{equation}
    f \colon \overline{\mathbb{D}}^2 \to \overbar{\Omega}
  \end{equation}
  such that $f|_{\partial \mathbb{D}^2} \colon \partial\mathbb{D}^2 \to \partial \Omega$ is absolutely continuous, that is,
  $f^{-1}_*(\mathcal{H}^1 |_{\partial \Omega})= \abs{f'(e^{i\theta})}\mathrm{d}\theta$. Note also that there is a bijection
  $f^* \colon H^1\cap C^0(\overbar{\Omega}) \to H^1\cap C^0(\overbar{\mathbb{D}}^2)$ so that
  \begin{equation}
    \sigma_k(\Omega) = \inf_{V_{k+1}} \sup_{\phi \in V_{k+1}} \frac{\int_{\Omega} \abs{\mathrm{d}\phi}^2}{\int_{\partial \Omega} \phi^2 d\mathcal{H}^1}
    = \inf_{V_{k+1}} \sup_{\phi \in V_{k+1}} \frac{\int_{\mathbb{D}^2} \abs{\mathrm{d}(f^*\phi)}^2}{\int_{\partial\mathbb{D}^2} (f^*\phi)^2 \rho \mathrm{d}\theta}
    = \sigma_k(\mathbb{D}^2,\rho),
  \end{equation}
  where $\rho := \abs{f'(e^{i\theta})} \in L^1(\partial\mathbb{D}^2)$ and $V_{k+1} \subset H^1\cap C^0(\overbar{\Omega})$.

  It follows, for example, from \cite{Girouard-Polterovich:2010:payne-schiffer} and \cite{Vinokurov:2025:sym-eigen-val-lms} that
  \begin{equation}
    \mathfrak{S}_k(\mathbb{D}^2) = 2\pi k.
  \end{equation}
  So, if $\sigma_k(\Omega) = \sigma_k(\mathbb{D}^2,\rho) = 2\pi k = \mathfrak{S}_k(\mathbb{D}^2)$,
  Lemma~\ref{lem:qualitative-steklov-stability} implies that $\rho \in C^\infty(\partial \mathbb{D}^2)$, and the smooth case
  was already established \cite[see][Remark~1.12]{Vinokurov:2025:sym-eigen-val-lms}.

  In particular, up to a conformal automorphism of the disk, $\sigma_1(\Omega) = \sigma_1(\mathbb{D}^2,\rho) = 2\pi$ is realized only on constant densities $\rho = \abs{f'|_{\partial\mathbb{D}^2}}$.
  By rescaling $\Omega$, assume that $\abs{f'|_{\partial\mathbb{D}^2}} \equiv 1$. By the maximum principle, $f' \colon \mathbb{D}^2 \to \mathbb{D}^2$.
  Let $F(\theta) := f(e^{i\theta})$.
  Around each point $F(\theta_0)$, the boundary $\partial\Omega = f(\partial\mathbb{D}^2)$ is locally given as a graph of a continuous function, say, $h(x)$. So,
  we find that
  $F(I) = F(\theta_0) + c_1\set{(x, h(x))}{x \in J}$ for some $\abs{c_1} = 1$ and small intervals $I \ni \theta_0$ and $J$. Hence, $\theta  \mapsto \operatorname*{Re} c^{-1}_1 f(e^{i\theta})$
  is monotone, and after shrinking $I$ if necessary, $\operatorname*{Re} c_2 f'(e^{i\theta}) \geq -\epsilon$ for a.e. $\theta \in I$, some $\abs{c_2} = 1$, and small enough $\epsilon > 0$. So, $f'(e^{i\theta})|_I$ does not cover
  the whole $\partial\mathbb{D}^2$, and since $\theta_0$ was arbitrary, $f'(z)$ extends holomorphically across the entire circle by \cite[][Theorem~4]{Seidel:1934:semicerc-extention}.
  As $f'$ and $1/f'$ are holomorphic near $\overbar{\mathbb{D}}^2$ ($f'$ does not vanish), the maximum principle yields $f' \equiv a \in \partial \mathbb{D}^2$, and hence $f(z) = a z + b$. Thus, $\Omega$ is a disk in that case.

\printbibliography
\end{document}